\newcommand{\RR}{\mathbf{R}}
\renewcommand{\SS}{\mathbf{S}}
\newcommand{\JJ}{\mathbf{J}}
\newcommand{\cA}{\mathcal{A}}
\newcommand{\cH}{\mathcal{H}}
\newcommand{\cJ}{\mathcal{J}}
\newcommand{\cL}{\mathcal{L}}
\newcommand{\cM}{\mathcal{M}}
\newcommand{\cN}{\mathcal{N}}
\newcommand{\cP}{\mathcal{P}}
\newcommand{\cQ}{\mathcal{Q}}
\newcommand{\cR}{\mathcal{R}}
\newcommand{\cS}{\mathcal{S}}
\DeclareMathOperator{\ind}{ind}
\DeclareMathOperator{\nul}{nul}
\DeclareMathOperator{\support}{spt}
\DeclareMathOperator{\dist}{dist}
\DeclareMathOperator{\intr}{int}
\DeclareMathOperator{\loc}{loc}
\newcommand{\closure}[1]{\overline{#1}}
\newcommand{\indt}[1]{{\bf 1}_{#1}}
\theoremstyle{definition} \newtheorem{defi}{Definition}
\theoremstyle{plain} \newtheorem{rema}[defi]{Remark}
\theoremstyle{definition} \newtheorem{theo}[defi]{Theorem}
\theoremstyle{definition} \newtheorem{prop}[defi]{Proposition}
\theoremstyle{definition} 
\theoremstyle{definition} \newtheorem{lemm}[defi]{Lemma}
\theoremstyle{definition} 
\theoremstyle{plain} \newtheorem*{exam}{Example}
\theoremstyle{definition} 
\theoremstyle{definition} 
\theoremstyle{definition} \newtheorem*{clai}{Claim}
\numberwithin{defi}{section} 
\numberwithin{equation}{section} 
\title[A note on the index of $2k$-ended phase transitions in $\RR^2$]{A note on the Morse index of $2k$-ended phase transitions in $\RR^2$}
\author{Christos Mantoulidis}
\address{Christos Mantoulidis, Department of Mathematics, Building 380, Sloan Hall, Stanford, CA 94305. c.mantoulidis@math.stanford.edu}
\begin{document}

\maketitle

\begin{abstract}
	We show that the Morse index of every $2k$-ended solution of the Allen-Cahn equation in $\RR^2$ is $\geq k-1$. This bound is expected to be sharp.
\end{abstract}

\section{Introduction} \label{sec:ac.introduction}

The Allen-Cahn equation is an elliptic partial differential equation that describes phase separation in multi-component alloy systems. It is given by:
\begin{equation} \label{eq:ac.pde}
	\Delta u = W'(u) \text{,}	
\end{equation}
where $W$ is a double-well energy potential. The one-dimensional case is important; it admits heteroclinic solutions $H : \RR \to \RR$, which are foundational in the theory of phase transitions, and characterized up to translations by
\begin{equation} \label{eq:ac.heteroclinic.solution}
	H' = \sqrt{2 W \circ H}, \; H(0) = 0 \text{.}	
\end{equation}

\begin{defi} \label{defi:ac.double.well.potential}
	A smooth map $W : \RR \to \RR$ is called a double-well potential provided:
	\begin{enumerate}
		\item $W$ is nonnegative, and vanishes at its two global minima $t = \pm 1$:
			\begin{equation} \label{assu:ac.double.well.potential.nonnegative} \tag{H1}
				W \geq 0, \; W(t) = 0 \iff t = \pm 1 \text{;}	
			\end{equation}
		\item $W$ has a unique, nondegenerate, critical point between its global minima, at $t = 0$:
			\begin{equation} \label{assu:ac.double.well.potential.saddle} \tag{H2}
				tW'(t) < 0 \text{ for } 0 < |t| < 1	\text{, and } W''(0) \neq 0 \text{;}
			\end{equation}
		\item $W$ is strictly convex near $\pm 1$:
			\begin{equation} \label{assu:ac.double.well.potential.convex} \tag{H3}
				W''(t) \geq \kappa > 0 \text{ for } |t| > 1-\alpha, \; \alpha \in (0,1) \text{;}
			\end{equation}
	\end{enumerate}
\end{defi}

\begin{exam} \label{exam:ac.canonical.potential}
	The standard double-well potential is $W(t) = \frac{1}{4} (1-t^2)^2$, and the corresponding equation \eqref{eq:ac.pde} is $\Delta u = u^3 - u$.
\end{exam}

\begin{rema}
	There is nothing special about $t = -1$, $0$, $1$; one can replace these points by any other triple points $t_1 < t_2 < t_3$ in Definition \ref{defi:ac.double.well.potential}, and everything will continue to hold just the same, up to relabeling.
\end{rema}

We now turn to the variational nature of \eqref{eq:ac.pde}. The equation in question arises as the Euler-Lagrange equation for the functional
\begin{equation} \label{eq:ac.energy}
	E[u] \triangleq \int \frac{1}{2} \Vert \nabla u \Vert^2 + W(u) \text{,}
\end{equation}
i.e., functions $u \in C^\infty_{\loc}$ that are zeroes of the {\bf first variation} operator $\delta E[u] : C^\infty_c \to \RR$,
\begin{equation} \label{eq:ac.first.variation}
	\delta E[u]\{\zeta\} \triangleq \int \langle \nabla u, \nabla \zeta \rangle + W'(u) \zeta \text{.}	
\end{equation}

One can make precise the notions of {\bf stability} and {\bf Morse index} in the Allen-Cahn setting by turning to the {\bf second variation} operator, $\delta^2 E[u] : W^{1,2}_0 \otimes W^{1,2}_0 \to \RR$,
\begin{equation} \label{eq:ac.second.variation}
	\delta^2 E[u]\{\zeta, \psi\} \triangleq \int \langle \nabla \zeta, \nabla \psi \rangle + W''(u)\zeta \psi \text{.}
\end{equation}
The Morse index of a critical point measures the number of linearly independent unstable directions for energy. From a physical perspective, unstable critical points are a lot less likely to be observed than stable ones.

\begin{defi}[Stability] \label{defi:ac.stability}
	A critical point $u$ of $E$ is called stable inside an open set $U$ if $\delta^2 E[u] \{ \zeta, \zeta \} \geq 0$ for all $\zeta \in W^{1,2}_0(U)$.
\end{defi}

\begin{defi}[Morse index] \label{defi:ac.index}
	A critical point $u$ of $E$ is said to have Morse index $k$ inside an open set $U$, denoted $\ind(u; U) = k$, provided
	\begin{multline*}
		\max \{ \dim V : V \subset W^{1,2}_0(U) \text{ is a vector space such that } \\
		\delta^2 E[u]\{ \zeta, \zeta\} < 0 \text{ for all } \zeta \in V \setminus \{ 0 \} \} \text{.}
	\end{multline*}
	If the choice of the open subset $U$ is clear then we will simply write $\ind(u)$. Stable critical points, by definition, have $\ind(u) = 0$.
\end{defi}

These notions of stability and Morse index are developed in the appendix (Section \ref{sec:app.pde.morse.index.spaces.quadratic.area.growth}); the relevant operator there is the {\bf Jacobi operator}, $-\Delta + W''(u)$.

In this note we prove the following theorem, which relates the Morse index of $2k$-ended solutions of \eqref{eq:ac.pde} (see Section \ref{sec:ac.m2k}) to their structure at infinity.

\begin{theo} \label{theo:ac.m2k.index.lower.bound}
	Let $u \in \cM_{2k}$, $k \geq 2$. Then $\ind(u) \geq k-1$.
\end{theo}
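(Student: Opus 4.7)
The strategy is to construct, for $R$ large, $k-1$ linearly independent functions $\zeta_1,\dots,\zeta_{k-1}\in W^{1,2}_0(B_R)$ on which the quadratic form $\delta^2 E[u]$ is negative definite. By Definition~\ref{defi:ac.index} this gives $\ind(u;B_R)\geq k-1$, hence $\ind(u)\geq k-1$.

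\textbf{Ingredients.} The Euclidean invariance of \eqref{eq:ac.pde} yields three canonical Jacobi fields for $L=-\Delta+W''(u)$: the translational $\partial_1 u,\partial_2 u$ and the rotational $\phi_\theta=-x_2\partial_1 u+x_1\partial_2 u$. Along the $j$-th end (where $u$ is exponentially close to a heteroclinic profile $H(y\cdot n_j)$ transverse to the end's asymptotic ray $e_j$), these Jacobi fields have explicit behavior: $v\cdot\nabla u$ is bounded with exponential transverse decay, while $\phi_\theta\approx r(n_j\cdot e_j^\perp)H'(y\cdot n_j)$ grows linearly with sign $\sigma_j=n_j\cdot e_j^\perp\in\{\pm 1\}$. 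Because $u$ alternates sign in consecutive asymptotic sectors, the $\sigma_j$ alternate as $j=1,\dots,2k$, so $\phi_\theta$ exhibits $2k$ sign changes at infinity. For $u\in\cM_{2k}$ (to be set up in Section~\ref{sec:ac.m2k}), further ``moduli'' Jacobi fields arise from infinitesimal deformations of the asymptotic end data, producing a formal Jacobi space whose dimension matches that of $\cM_{2k}$; modulo the $3$-dimensional rigid-motion subspace one is left with an intrinsic moduli-Jacobi subspace of dimension $2k-3$.

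\textbf{Test functions.} For each $i=1,\dots,k-1$, I would build $\zeta_i$ by truncating an appropriately chosen moduli-Jacobi field $\Phi_i$ with a radial cutoff $\eta_R$ supported in $B_{2R}\setminus B_{R/2}$, and then correcting with an interior bump function so that the resulting test function has strictly negative second variation. The main tool is the Fischer--Colbrie--Schoen identity: for any Jacobi field $\Phi$,
\[
\delta^2 E[u]\{\eta_R\Phi,\eta_R\Phi\}=\int |\nabla\eta_R|^2\Phi^2,
\]
which tends to zero as $R\to\infty$ when $\Phi$ is bounded and transversely exponentially localized. Taking $\Phi_i$ in a carefully chosen $(k-1)$-dimensional subfamily of moduli-Jacobi fields (corresponding to relative rotations of pairs of opposite ends) and supplementing $\eta_R\Phi_i$ by a compactly supported interior correction produces $\zeta_i$'s whose pairwise second variations are approximately diagonal with strictly negative diagonal entries for large $R$. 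Linear independence follows from the distinct ``angular signatures'' of the $\Phi_i$ at infinity.

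\textbf{Main obstacle.} The main technical challenges are: (i) justifying the existence and decay properties of moduli-Jacobi fields adapted to each end (which rests on the Kowalczyk--Liu--Pacard moduli theory invoked in Section~\ref{sec:ac.m2k}), and (ii) identifying the correct $(k-1)$-dimensional subspace of moduli-Jacobi directions that genuinely yield strictly negative $\delta^2 E$. The right count should arise as follows: the $k$ asymptotic line directions admit $k$ independent rotational parameters, one of which is captured by the global rotation Jacobi field $\phi_\theta$, leaving $k-1$ genuine instability directions. Showing strict negativity --- as opposed to the non-strict $\delta^2 E\leq 0$ that the Fischer--Colbrie--Schoen identity naturally gives for truncated Jacobi fields --- is the core analytic step, and is where the detailed analysis of the interaction between consecutive ends enters.
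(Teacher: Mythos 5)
Your proposal takes a genuinely different route from the paper, but as it stands it has gaps that are not merely technical. The paper does not use the rotational or moduli Jacobi fields at all: it fixes a single generic direction $\mathbf{e}$, considers the translational Jacobi field $v=\langle\nabla u,\mathbf{e}\rangle$, shows by an elementary angle-chasing argument that the sign pattern of $v$ on the $2k$ ends forces at least $2k-2$ unbounded ``arms'' of the nodal set, deduces via Euler's formula that $v$ has at least $k$ nodal domains, and then converts nodal domains into index through a noncompact Courant nodal domain theorem together with a Ghoussoub--Gui-type lemma (a bounded Jacobi field vanishing on $\partial\Omega$ forces $\ind(L;\Omega')\geq 1$ for any strictly larger $\Omega'$, by unique continuation). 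That last lemma is exactly the mechanism that upgrades the non-strict inequality coming from truncated Jacobi fields to genuine instability, and it is the ingredient your sketch is missing.

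Concretely, the gaps in your plan are these. First, the moduli Jacobi fields you select --- ``relative rotations of pairs of opposite ends'' --- grow linearly along the rotated ends, so they are not in the class (``bounded and transversely exponentially localized'') for which your own cutoff identity gives $\int\Vert\nabla\eta_R\Vert^2\Phi^2\to 0$; for a linearly growing, transversely localized $\Phi$ even the logarithmic cutoff gives $\int_{B_{R^2}\setminus B_R}\Vert\nabla\xi_R\Vert^2\Phi^2\sim R^2/\log^2 R\to\infty$, so the construction does not even produce test functions with small energy. Second, even for bounded Jacobi fields the identity only yields $\delta^2E\to 0$ from above, i.e., non-strict; you acknowledge that strict negativity is ``the core analytic step'' but the ``compactly supported interior correction'' is never specified, and without a concrete mechanism (such as the unique-continuation argument of Lemma \ref{lemm:app.pde.unstable.past.nodal.domain}, which requires the Jacobi field to vanish on the boundary of a nodal domain) there is no reason the corrected functions are negative directions. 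Third, the count of $k-1$ ``genuine instability directions'' among the $2k$ moduli parameters is a heuristic dimension count, not an argument: the existence, decay, linear independence after truncation, and approximate diagonality of the second-variation matrix are all asserted rather than proved. As written, the proposal identifies the right answer but does not contain a proof.
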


\begin{rema} \label{rema:ac.m2k.index.lower.bound.assumption}
	Notice that we assume $k \geq 2$, as all elements of $\cM_2$ are stable; this follows, for instance, from the work of K. Wang \cite{Wang14}.
\end{rema}

\begin{rema}
	K. Wang and J. Wei have announced \cite{WangWei17} that $2k$-ended solutions of \eqref{eq:ac.pde} in $\RR^2$ are in bijection with the set of all solutions with $\ind(u) < \infty$. In the same paper, they  independently obtained a special case of Theorem \ref{theo:ac.m2k.index.lower.bound}, namely, that Morse index $1$ is only attained by four-ended solutions.
\end{rema}

\thanks{{\bf Acknowledgements.} This work forms a portion of the author's Ph.D. thesis at Stanford University. The author would like to thank R. Schoen, L. Simon, R. Mazzeo, L. Ryzhik, O. Chodosh, and C. Li for stimulating conversations and their interest in this work, as well as the Department of Mathematics at the University of California, Irvine, where part of this research was carried out. This research was partially supported by the Ric Weiland Graduate Fellowship at Stanford University, and NSF grant DMS-1613603.}

\section{Moduli space $\cM_{2k}$ of $2k$-ended solutions} \label{sec:ac.m2k}

Del Pino-Kowalczyk-Pacard defined in \cite{DelPinoKowalczykPacard13} a space $\cM_{2k}$ of solutions of \eqref{eq:ac.pde} on $\RR^2$ that looks near infinity like a collection of $2k$ copies of the one-dimensional heteroclinic solution \eqref{eq:ac.heteroclinic.solution}. We recall the construction of this space here (after \cite{DelPinoKowalczykPacard13}) for the sake of completeness.

Fix $k \in \{1, 2, \ldots \}$. We denote by $\Lambda^{2k}$ (denoted $\Lambda^{2k}_{\operatorname{ord}}$ in \cite{DelPinoKowalczykPacard13}) the space of ordered $2k$-tuples $\lambda = (\lambda_1, \ldots, \lambda_{2k})$ of oriented affine lines on $\RR^2$, parametrized as
\[ \lambda_j = (r_j, \mathbf{f}_j) \in \RR \times \SS^1, \; j = 1, \ldots, 2k \text{,} \]
where $\mathbf{f}_j = (\cos \theta_j, \sin \theta_j)$, $\theta_1 < \ldots < \theta_{2k} < 2\pi + \theta_1$. For $\lambda \in \Lambda^{2k}$, we denote
\begin{equation} \label{eq:ac.m2k.minimum.angle}
	\theta_\lambda \triangleq \frac{1}{2} \min \{ \theta_2 - \theta_1, \ldots, \theta_{2k} - \theta_{2k-1}, 2\pi + \theta_1 - \theta_{2k} \} \text{.}	
\end{equation}
Fix $\lambda \in \Lambda^{2k}$. For large $R > 0$ and all $j = 1, \ldots, 2k$, there exists $s_j \in \RR$ such that $r_j \JJ \mathbf{f}_j + s_j \mathbf{f}_j \in \partial B_R(\mathbf{0})$, the half-lines $\lambda_j^+ \triangleq r_j \JJ \mathbf{f}_j + s_j \mathbf{f}_j + \RR_+ \mathbf{f}_j$ are disjoint and contained in $\RR^2 \setminus B_R(\mathbf{0})$, and the minimum distance of any two distinct $\lambda_i^+$, $\lambda_j^+$ is $\geq 4$. (Here, $\JJ \in \operatorname{End}(\RR^2)$ is the counterclockwise rotation map by $\frac{\pi}{2}$.) The affine half-lines $\lambda_1^+, \ldots, \lambda_{2k}^+$ and the circle $\partial B_R(\mathbf{0})$ induce a decomposition of $\RR^2$ into $2k+1$ open sets,
\begin{multline} \label{eq:ac.m2k.r2.decomposition}
	\Omega_0 \triangleq B_{R+1}(\mathbf{0}) \text{, and} \\
	\Omega_j \triangleq \bigcap_{i \neq j} \{ \mathbf{x} \in \RR^2 \setminus B_{R-1}(\mathbf{0}) : \dist(\mathbf{x}, \lambda_j^+) < \dist(\mathbf{x}, \lambda_i^+) + 2 \}, \\
	j = 1, \ldots, 2k
\end{multline}
Note that these open sets are not disjoint. Then, we define $\chi_{\Omega_0}, \ldots, \chi_{\Omega_{2k}}$ to be a smooth partition of unity of $\RR^2$ subordinate to $\Omega_0, \ldots, \Omega_{2k}$, and such that
\begin{multline} \label{eq:ac.m2k.r2.decomposition.disjoint}
	\chi_{\Omega_0} \equiv 1 \text{ on } \Omega_0' \triangleq B_{R-1}(\mathbf{0}), \text{ and } \\
	\chi_{\Omega_j} \equiv 1 \text{ on } \Omega_j' \triangleq \cap_{i\neq j} \{ \mathbf{x} \in \RR^2 \setminus B_{R+1}(\mathbf{0}) : \dist(\mathbf{x}, \lambda_j^+) < \dist(\mathbf{x}, \lambda_i^+) - 2\}, \\
	j = 1, \ldots, 2k \text{.}
\end{multline}
Note that these new open sets are disjoint. Without loss of generality, $|\chi_{\Omega_j}| + \Vert \nabla \chi_{\Omega_j} \Vert + \Vert \nabla^2 \chi_{\Omega_j} \Vert \leq c_1$ for all $j = 0, \ldots, 2k$, with $c_1 = c_1(\theta_\lambda)$. Finally, we define
\begin{equation} \label{eq:ac.m2k.approximate.solution}
	u_\lambda \triangleq \sum_{j=1}^{2k} (-1)^{j+1} \chi_{\Omega_j} H(\dist^s(\cdot, \lambda_j)) \text{,}	
\end{equation}
where $\dist^s(\cdot, \lambda_j)$ denotes the signed distance to $\lambda_j$, taking $\JJ\mathbf{f}_j$ to be the positive direction. Here, $H$ is the heteroclinic solution \eqref{eq:ac.heteroclinic.solution}.

\begin{defi}[Del Pino-Kowalczyk-Pacard {\cite[Definition 2.2]{DelPinoKowalczykPacard13}}] \label{defi:ac.m2k}
	For $k \geq 1$, we denote
	\begin{equation} \label{eq:ac.m2k.s2k}
		\cS_{2k} \triangleq \bigcup_{\lambda \in \Lambda^{2k}} \{ u \in C^\infty(\RR^2) : u - u_\lambda \in W^{2,2}(\RR^2) \} \text{.}
	\end{equation}
	We endow $\cS_{2k}$ with the weak topology of the operator
	\begin{equation} \label{eq:ac.m2k.J}
		\cJ : \cS_{2k} \to W^{2,2}(\RR^2) \times \Lambda^{2k}, \; \cJ(u) \triangleq (u-u_\lambda, \lambda) \text{.}
	\end{equation}
	Finally, we define the space of ``$2k$-ended solutions'' to be
	\begin{equation} \label{eq:ac.m2k}
		\cM_{2k} \triangleq \{ u \in \cS_{2k} \text{ satisfying } \eqref{eq:ac.pde} \} \text{.}
	\end{equation}
\end{defi}

\begin{rema} \label{rema:ac.m2k.dimension}
	It was shown in \cite[Theorem 2.2]{DelPinoKowalczykPacard13} that $\cM_{2k}$ is a $2k$-dimensional Banach manifold in neighborhoods of points $u \in \cM_{2k}$ satisfying a certain ``nondegeneracy'' condition, namely, the nonexistence of exponentially decaying Jacobi fields (see Definition \ref{defi:ac.jacobi.fields}).
\end{rema}

\begin{exam}[One-dimensional solutions] \label{exam:ac.m2.one.dimensional}
	Elements of $\cM_2$ are the lifts of one-dimensional heteroclinic solutions to $\RR^2$,
	\[ \RR^2 \ni \mathbf{x} \mapsto H(\langle \mathbf{x}, \mathbf{e} \rangle - \beta) \text{, with } (\mathbf{e}, \beta) \in \SS^1 \times \RR \text{.} \]
\end{exam}

The following result, due to Del Pino-Kowalczyk-Pacard, significantly improves the a priori $W^{2,2}$ decay of $u - u_\lambda$ to an exponential decay:

\begin{theo}[Refined asymptotics, Del Pino-Kowalczyk-Pacard {\cite[Theorem 2.1]{DelPinoKowalczykPacard13}}] \label{theo:ac.m2k.refined.asymptotics}
	Let $u_0 \in \cM_{2k}$. There exists a neighborhood $U \subset \cM_{2k}$ and a $\delta = \delta(u_0) > 0$ such that
	\begin{equation} \label{eq:ac.m2k.refined.asymptotics.i}
		\cJ(U) \subseteq e^{-\delta \Vert \mathbf{x} \Vert} W^{2,2}(\RR^2) \times \Lambda^{2k} \text{,}
	\end{equation}
	and, moreover, such that the restricted map
	\begin{equation} \label{eq:ac.m2k.refined.asymptotics.ii}
		\cJ|_U : U \to e^{-\delta \Vert \mathbf{x} \Vert} W^{2,2}(\RR^2) \times \Lambda^{2k}	
	\end{equation}
	is continuous with respect to the corresponding topologies; here, $\cJ$ is the map defined in \eqref{eq:ac.m2k.J}.
\end{theo}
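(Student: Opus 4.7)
Given $u \in \cM_{2k}$ with associated $\lambda = \lambda(u)$, set $v := u - u_\lambda \in W^{2,2}(\RR^2)$. Then $v$ solves
\begin{equation*}
-\Delta v + W''(u_\lambda) v = -f + Q(v), \quad f := W'(u_\lambda) - \Delta u_\lambda, \quad |Q(v)| \leq C v^2.
\end{equation*}
My first step is to prove $|f(x)| \leq C e^{-\mu \|x\|}$ for some $\mu > 0$. On the interior pieces $\Omega_j'$ only one cutoff $\chi_{\Omega_j}$ acts, so $u_\lambda = (-1)^{j+1} H(\dist^s(\cdot, \lambda_j))$ solves \eqref{eq:ac.pde} exactly (because $H$ satisfies \eqref{eq:ac.heteroclinic.solution} and $\dist^s(\cdot, \lambda_j)$ has unit gradient and zero Laplacian), forcing $f \equiv 0$ there. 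In the overlap regions the competing distances $\dist(\cdot, \lambda_i^+)$ are all bounded below by a positive multiple of $\|x\| \sin \theta_\lambda$, so by hypothesis H3 and the asymptotic analysis of \eqref{eq:ac.heteroclinic.solution} near the strictly convex wells, $|H \mp 1|$ and $|H'|$ decay exponentially; both $W'(u_\lambda)$ and the terms produced by $\nabla \chi_{\Omega_j}$ and $\nabla^2 \chi_{\Omega_j}$ acting on $H \mp 1$ are then $O(e^{-\mu \|x\|})$.

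My second step promotes $v \in W^{2,2}$ to uniform pointwise smallness: interior elliptic regularity on unit balls (using $W'$ smooth and $u$ uniformly bounded) combined with Sobolev embedding and the fact that $\|v\|_{L^2(B_2(x))} \to 0$ as $\|x\| \to \infty$ yields $v(x), \nabla v(x), \nabla^2 v(x) \to 0$. Fixing a unit tubular neighborhood $T$ of $\cup_j \lambda_j^+$, hypothesis H3 then gives $V(x) := \int_0^1 W''(u_\lambda + \tau v)\, d\tau \geq \kappa/2$ on the ``good region'' $G := \{\|x\| \geq R_1\} \setminus T$ for $R_1$ large. For $\delta \in (0, \sqrt{\kappa}/4)$ and $A \gg 1$, the barrier $\phi(x) := A e^{-\delta\|x\|}$ satisfies $(-\Delta + V)\phi \geq (\kappa/4)\phi$, and the Kato-type inequality $-\Delta |v| + V|v| \leq |f| + C v^2$, combined with the exponential bound on $f$ and the smallness of $v$, gives $(-\Delta + V)(|v|-\phi) \leq 0$ in $G$. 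Choosing $A$ so that $|v| \leq \phi$ on $\partial G$---possible because $v \to 0$---the maximum principle closes $|v| \leq A e^{-\delta \|x\|}$ throughout $G$.

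\textbf{Main obstacle.} The delicate step is propagating exponential decay into the tubes $T$, where $W''(u_\lambda)$ passes through the negative value $W''(0)$ on the transition lines and the linearized operator $-\Delta + W''(u_\lambda)$ admits the transverse heteroclinic Jacobi field $H'$ as a neutral mode, blocking any direct maximum principle. My approach is to work in Fermi coordinates $(s, \tau)$ along each half-line $\lambda_j^+$ (with $s$ the signed distance and $\tau$ the arclength from its foot) and decompose $v(s,\tau) = c_j(\tau) H'(s) + v_j^\perp(s,\tau)$ with $v_j^\perp \perp H'$ in $L^2(ds)$. The parametrization of $\cM_{2k}$ by $\Lambda^{2k}$ through $\cJ$ is set up precisely so that selecting $\lambda = \lambda(u)$ kills the shift mode $c_j$ to leading order---infinitesimally, translating the foot of $\lambda_j$ along $\JJ \mathbf{f}_j$ or rotating $\mathbf{f}_j$ corresponds to adding a multiple of $H'$ or $\tau H'$ to $v$. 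The orthogonal remainder $v_j^\perp$ then satisfies a 2D equation whose transverse operator $-\partial_s^2 + W''(H(s))$ has positive spectral gap $\nu > 0$ on $(H')^\perp$, and an Agmon-type weighted $L^2$ estimate
\begin{equation*}
\int e^{2\delta \tau}\bigl(|\partial_\tau v_j^\perp|^2 + \nu |v_j^\perp|^2\bigr)\, ds\, d\tau \leq C \int e^{2\delta \tau} \bigl(|f|^2 + |v|^4\bigr)\, ds\, d\tau
\end{equation*}
forces exponential decay in $\tau$ for $\delta < \sqrt{\nu}/2$. Patching the tubular estimate with the good-region bound and bootstrapping through weighted interior elliptic regularity upgrades pointwise exponential decay to $e^{-\delta\|x\|} W^{2,2}(\RR^2)$; the Lipschitz dependence of each constant above on $\lambda$ in a neighborhood of $\lambda(u_0)$ then yields the claimed continuity of $\cJ|_U$.
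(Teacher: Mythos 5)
First, a point of order: the paper does not prove this statement; it is imported verbatim from Del Pino--Kowalczyk--Pacard \cite[Theorem 2.1]{DelPinoKowalczykPacard13}, so there is no in-paper proof to compare against. Your sketch does follow the general architecture of the original proof (exponential smallness of the error $f = W'(u_\lambda) - \Delta u_\lambda$, a priori decay of $v = u - u_\lambda$ at infinity, coercivity of the transverse operator $-\partial_s^2 + W''(H(s))$ on $(H')^\perp$, and a Fermi decomposition along each end), and Step 1 and the elliptic-regularity part of Step 2 are fine. But two steps as written have genuine gaps.

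The more serious one is the neutral mode. After decomposing $v = c_j(\tau) H'(s) + v_j^\perp$ along the $j$-th end, your Agmon estimate only controls $v_j^\perp$; the assertion that ``selecting $\lambda = \lambda(u)$ kills the shift mode $c_j$ to leading order'' is not an argument, because $c_j$ is a \emph{function} of $\tau$, not a finite-dimensional parameter that the choice of line can absorb. What the membership $u - u_\lambda \in W^{2,2}$ actually gives you is only $c_j \to 0$ (together with its derivatives, after regularity). To close the argument you must project the equation onto $H'$ to derive a second-order ODE $c_j''(\tau) = E_j(\tau)$ whose right-hand side is exponentially small (by Step 1, the quadratic terms, and the already-established decay of $v_j^\perp$), and then integrate twice from infinity using $c_j \to 0$ to conclude $c_j(\tau) = O(e^{-\mu\tau})$. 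Without this ODE step the decomposition does not yield decay of $v$ in the tubes. The second gap is an ordering problem in the barrier argument: the boundary $\partial G$ of your good region contains the unbounded lateral boundaries of the tubes $T$, where $v$ is at that stage only known to tend to $0$, not to be $\leq A e^{-\delta\|x\|}$; no choice of $A$ arranges $|v| \leq \phi$ there, and $\dist(x,T)$ is not comparable to $\|x\|$ for points of $G$ hugging the tubes. You must run the tube estimate first (or simultaneously) and feed the resulting decay on $\partial T$ into the comparison in $G$; as written, the two halves of Step 2 are circular.
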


\section{Proof of Theorem \ref{theo:ac.m2k.index.lower.bound}}

To prove this Theorem, we will need to obtain a precise pointwise understanding of kernel elements of the Jacobi operator, seeing as to how they will play a significant role in the relevant variational theory:

\begin{defi}[Jacobi fields] \label{defi:ac.jacobi.fields}
	If $u$ is a critical point of $E$ in $U$, then the space of its Jacobi fields consists of all functions $v$ that satisfy $- \Delta v + W''(u) v = 0$ in $U$ in the classical sense.
\end{defi}

Denote $R$, $\lambda \in \Lambda^{2k}$, and $u_\lambda$ the objects associated with $u$ by its construction as an element of $\cM_{2k}$ in Section \ref{sec:ac.m2k}. Also, denote $\lambda = (\lambda_1, \ldots, \lambda_{2k})$, with $\lambda_i = (\tau_i, \mathbf{f}_i) \in \RR \times \SS^1$. Recall from \cite[(2.16)]{DelPinoKowalczykPacard13} that:
\begin{equation} \label{eq:ac.m2k.index.lower.bound.i}
	\sum_{i=1}^{2k} \mathbf{f}_i = \mathbf{0} \text{,}	
\end{equation}
and that, after possibly enlarging $R > 0$, $\{ u = 0 \} \setminus B_R(\mathbf{0})$ decomposes into $2k$ disjoint curves $\Gamma_i$, $i = 1, \ldots, 2k$, and, for some $\delta < \theta_\lambda(u)$, $\Gamma_i \subset S(\mathbf{f}_i, \delta/2, R)$ with $S(\mathbf{f}_i, \delta, R)$ all pairwise disjoint. Here,
\begin{equation} \label{eq:ac.m2k.index.lower.bound.ii}
	S(\mathbf{e}, \theta, R) \triangleq \{ r \mathbf{f} : r \geq R, \dist_{\SS^1}(\mathbf{f}, \mathbf{e}) < \theta \} \text{.}
\end{equation}
Finally, using Theorem \ref{theo:ac.m2k.refined.asymptotics} we see that, perhaps after shrinking $\delta > 0$ and enlarging $R > 0$, and perhaps after an ambient rigid motion,
\begin{equation} \label{eq:ac.m2k.index.lower.bound.iii}
	\frac{\nabla u}{\Vert \nabla u \Vert} \approx (-1)^{i+1} \JJ \mathbf{f}_i \text{ in } S(\mathbf{f}_i, \delta/2, R) \text{.}
\end{equation}
		
Lay out $\mathbf{f}_1, \ldots, \mathbf{f}_{2k} \in \SS^1$, and color them red (negative) or blue (positive) depending on the sign of $\langle (-1)^{i+1} \JJ \mathbf{f}_i, \mathbf{e} \rangle$. Here, $\mathbf{e} \in \SS^1$ is a fixed direction, chosen generically, so that $\langle \JJ \mathbf{f}_i, \mathbf{e} \rangle \neq 0 \text{ for all } i = 1, \ldots, 2k$. We will temporarily need the following generalization of $\JJ$:
\[ \JJ_\theta \in \operatorname{End}(\RR^2) \text{ acting by } \begin{bmatrix} \cos \theta & -\sin \theta \\ \sin \theta & \cos \theta \end{bmatrix} \text{.} \]
There exist unique $\varphi_1, \ldots, \varphi_{2k} \in (0, 2\pi)$ such that $\mathbf{f}_{i+1} = \JJ_{\varphi_i}(\mathbf{f}_i)$ for all $i = 1, \ldots, 2k$. It's easy to see that
\begin{equation} \label{eq:ac.m2k.index.lower.bound.iv}
	\varphi_i \in (0, \pi) \text{ for all }	 i = 1, \ldots, 2k
\end{equation}
by combining \eqref{eq:ac.m2k.index.lower.bound.i} with $k \geq 2$ (recall Remark \ref{rema:ac.m2k.index.lower.bound.assumption}).

\begin{clai}
	If $\mathbf{f}_{2\ell-1}$, $\mathbf{f}_{2\ell}$ have the same color, blue, then
	\[ \JJ_{-\varphi_{2\ell-1}} \mathbf{e}, \mathbf{f}_{2\ell-1}, \mathbf{e} \text{ lie counterclockwise on } \SS^1 \text{ in the order listed;} \]
	else, if their common color is red, then
	\[ \JJ_{-\varphi_{2\ell-1}} (-\mathbf{e}), \mathbf{f}_{2\ell-1}, -\mathbf{e} \text{ lie counterclockwise  on } \SS^1 \text{ in the order listed.} \] 
\end{clai}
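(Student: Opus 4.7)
The plan is to reduce the claim to explicit angular inequalities on $\SS^1$. After an ambient rotation I take $\mathbf{e}=(1,0)$ and parametrize $\mathbf{f}_{2\ell-1} = (\cos\alpha,\sin\alpha)$, so that $\mathbf{f}_{2\ell} = \JJ_\varphi \mathbf{f}_{2\ell-1}$ has angular coordinate $\alpha+\varphi$, where I abbreviate $\varphi := \varphi_{2\ell-1}$. The single crucial input from the hypotheses is the bound $\varphi \in (0,\pi)$ provided by \eqref{eq:ac.m2k.index.lower.bound.iv}. A direct computation gives $\langle (-1)^{i+1}\JJ\mathbf{f}_i, \mathbf{e}\rangle = (-1)^i \sin\alpha_i$, where $\alpha_i$ is the angular coordinate of $\mathbf{f}_i$, so $\mathbf{f}_{2\ell-1}$ is blue iff $\sin\alpha<0$ and $\mathbf{f}_{2\ell}$ is blue iff $\sin(\alpha+\varphi)>0$; the red conditions are obtained by flipping each inequality.

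The next step is to intersect the two constraints. In the both-blue case, $\sin\alpha<0$ and $\sin(\alpha+\varphi)>0$ respectively force $\alpha \in (-\pi,0)$ and $\alpha+\varphi\in(0,\pi)$ modulo $2\pi$. Since $\varphi\in(0,\pi)$, the second condition rewrites as $\alpha\in(-\varphi,\pi-\varphi)$, and intersecting with the first leaves $\alpha\in(-\varphi,0)$. Because $\JJ_{-\varphi}\mathbf{e}$ and $\mathbf{e}$ have angles $-\varphi$ and $0$, the open CCW arc between them is exactly the set of unit vectors of angular coordinate in $(-\varphi,0)$, which places $\mathbf{f}_{2\ell-1}$ strictly between them in the claimed order. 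The both-red case is handled identically upon replacing $\mathbf{e}$ by $-\mathbf{e}$: one obtains $\alpha\in(0,\pi)$ and $\alpha+\varphi\in(\pi,2\pi)$, hence $\alpha\in(\pi-\varphi,\pi)$, which is precisely the CCW arc from $\JJ_{-\varphi}(-\mathbf{e})$ (angle $\pi-\varphi$) to $-\mathbf{e}$ (angle $\pi$).

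No step here is genuinely difficult; the argument is essentially angular bookkeeping. The one substantive input that must not be omitted is the strict upper bound $\varphi_{2\ell-1}<\pi$: without it, the CCW rotation $\JJ_{\varphi_{2\ell-1}}$ could send $\mathbf{f}_{2\ell-1}$ to $\mathbf{f}_{2\ell}$ via the ``long way around,'' crossing $-\mathbf{e}$ in the blue case or $+\mathbf{e}$ in the red case, which would put $\mathbf{f}_{2\ell-1}$ into the complementary arc and falsify the claim. This is exactly where \eqref{eq:ac.m2k.index.lower.bound.iv}, and therefore the standing assumption $k\geq 2$ via Remark \ref{rema:ac.m2k.index.lower.bound.assumption}, is used.
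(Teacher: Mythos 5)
Your proof is correct and follows essentially the same route as the paper's: both arguments recast the two color conditions as membership of $\mathbf{f}_{2\ell-1}$ in two rotated open half-circles and intersect them using the bound $\varphi_{2\ell-1}\in(0,\pi)$ from \eqref{eq:ac.m2k.index.lower.bound.iv}; you simply carry this out in explicit angular coordinates rather than with the rotated sets $\JJ_{-\frac{\pi}{2}}(\cP)$ and $\JJ_{\frac{\pi}{2}-\varphi}(\cP)$. Your closing remark correctly pinpoints the role of the strict bound $\varphi_{2\ell-1}<\pi$, which is exactly where the paper invokes the same inequality.
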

\begin{proof}
	Without loss of generality, let us assume $\ell = 1$. Recall that the respective colors are determined by the signs of $\langle \JJ_{\frac{\pi}{2}} \mathbf{f}_{1}, \mathbf{e} \rangle$ and $\langle -\JJ_{\frac{\pi}{2}} \mathbf{f}_{2}, \mathbf{e} \rangle = \langle \JJ_{-\frac{\pi}{2}+\varphi_{1}} \mathbf{f}_{1}, \mathbf{e} \rangle$.
	
	Denote $\cP \triangleq \{ \mathbf{f} \in \SS^1 : \langle \mathbf{f}, \mathbf{e} \rangle > 0 \}$. 	If both colors are blue, then
	\[ \JJ_{\frac{\pi}{2}} \mathbf{f}_{1} \in \cP \iff \mathbf{f}_{1} \in \JJ_{-\frac{\pi}{2}}(\cP) \]
	and
	\[ \JJ_{-\frac{\pi}{2}+\varphi_{1}} \mathbf{f}_{1} \in \cP \iff \mathbf{f}_{1} \in \JJ_{\frac{\pi}{2} - \varphi_{1}}(\cP) \text{,} \]
	i.e., $\mathbf{f}_{1} \in \JJ_{-\frac{\pi}{2}}(\cP) \cap \JJ_{\frac{\pi}{2} - \varphi_{1}}(\cP)$. Using \eqref{eq:ac.m2k.index.lower.bound.iv}, we see that the three vertices $\JJ_{-\varphi_{1}} \mathbf{e}$, $\mathbf{f}_{1}$, and $\mathbf{e}$, must lie counterclockwise in this order on $\SS^1$.
	
	If both colors are red, then, by a similar argument, $\mathbf{f}_1 \in \JJ_{-\frac{\pi}{2}}(-\cP) \cap \JJ_{\frac{\pi}{2}-\varphi_1}(-\cP)$, and we see that the three vertices $\JJ_{-\varphi_1}(-\mathbf{e})$, $\mathbf{f}_1$, and $-\mathbf{e}$, must lie counterclockwise in this order on $\SS^1$. 
\end{proof}

In a completely analogous manner, one also checks that:

\begin{clai}
	If $\mathbf{f}_{2\ell}$, $\mathbf{f}_{2\ell+1}$ have the same color, blue, then
	\[ \JJ_{-\varphi_{2\ell}} (-\mathbf{e}), \mathbf{f}_{2\ell}, -\mathbf{e} \text{ lie counterclockwise  on } \SS^1 \text{ in the order listed;} \] 
	else, if their common color is red, then
	\[ \JJ_{-\varphi_{2\ell}} \mathbf{e}, \mathbf{f}_{2\ell}, \mathbf{e} \text{ lie counterclockwise on } \SS^1 \text{ in the order listed.} \]
\end{clai}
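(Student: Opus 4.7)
The plan is to mirror the proof of the preceding claim, tracking the single modification that the alternating coefficient $(-1)^{i+1}$ reads ``$-,+$'' for the new pair $(\mathbf{f}_{2\ell}, \mathbf{f}_{2\ell+1})$ rather than ``$+,-$'' for the old pair $(\mathbf{f}_{2\ell-1}, \mathbf{f}_{2\ell})$. It is precisely this single sign reversal that will exchange $\mathbf{e}$ and $-\mathbf{e}$ between the two conclusions.

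Set $\ell = 1$ by cyclic relabeling and translate the two color conditions back to a single unknown. The color of $\mathbf{f}_2$ is given by the sign of $\langle -\JJ\mathbf{f}_2, \mathbf{e}\rangle = \langle \JJ_{-\pi/2}\mathbf{f}_2, \mathbf{e}\rangle$, while the color of $\mathbf{f}_3$, after substituting $\mathbf{f}_3 = \JJ_{\varphi_2}\mathbf{f}_2$, is the sign of $\langle \JJ_{\pi/2+\varphi_2}\mathbf{f}_2, \mathbf{e}\rangle$. In the blue case both inner products are positive, so
\[ \mathbf{f}_2 \in \JJ_{\pi/2}(\cP) \cap \JJ_{-\pi/2-\varphi_2}(\cP) \text{;} \]
by \eqref{eq:ac.m2k.index.lower.bound.iv}, the intersection of these two open half-circles is a single open arc of length $\varphi_2 \in (0, \pi)$. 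Parametrizing $\SS^1$ by angle measured from $\mathbf{e}$, one reads off that this arc is exactly the counterclockwise arc from $\JJ_{-\varphi_2}(-\mathbf{e})$ to $-\mathbf{e}$. In the red case both inner products are negative, giving $\mathbf{f}_2 \in \JJ_{\pi/2}(-\cP) \cap \JJ_{-\pi/2-\varphi_2}(-\cP)$, which by the same computation is the counterclockwise arc from $\JJ_{-\varphi_2}\mathbf{e}$ to $\mathbf{e}$. Either verification matches the asserted ordering on the nose.

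There is no substantive obstacle here: the argument is pure bookkeeping of two sign conventions—the $(-1)^{i+1}$ in the color rule, and the equivalence $\mathbf{f} \in \JJ_\theta(\cQ) \iff \JJ_{-\theta}\mathbf{f} \in \cQ$ used to pull both color conditions back to the common vector $\mathbf{f}_2$. Once these are handled consistently, and one remembers that \eqref{eq:ac.m2k.index.lower.bound.iv} forces $\varphi_2 < \pi$ (so that the two half-circles overlap in a proper arc rather than the full circle), the conclusion is immediate.
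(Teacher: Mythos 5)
Your proof is correct and is precisely the ``completely analogous'' argument that the paper leaves to the reader: you pull both color conditions back to $\mathbf{f}_{2\ell}$ via $\mathbf{f}_{2\ell+1} = \JJ_{\varphi_{2\ell}}\mathbf{f}_{2\ell}$, intersect the two rotated half-circles, and use \eqref{eq:ac.m2k.index.lower.bound.iv} to identify the resulting arc, exactly mirroring the paper's proof of the first claim with the parity of the alternating sign reversed. The arc computations (from $\JJ_{-\varphi_{2\ell}}(-\mathbf{e})$ to $-\mathbf{e}$ in the blue case, from $\JJ_{-\varphi_{2\ell}}\mathbf{e}$ to $\mathbf{e}$ in the red case) check out.
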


We now make the following key observation:

\begin{clai}
	There exist at least $2k-2$ groups of consecutive same-colored vertices.
\end{clai}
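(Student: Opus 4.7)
The plan is to bypass the two preceding geometric claims and instead carry out a direct hemisphere-counting argument, using only the vector identity \eqref{eq:ac.m2k.index.lower.bound.i} and the parity structure of the coloring rule.

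First I would introduce, for each $i = 1, \ldots, 2k$, the hemisphere sign $\sigma_i := \sign \langle \JJ \mathbf{f}_i, \mathbf{e}\rangle \in \{+1,-1\}$; this is well defined since $\mathbf{e}$ was chosen generically so that $\langle \JJ \mathbf{f}_i, \mathbf{e}\rangle \neq 0$ for all $i$. By the very definition of the coloring, the color of vertex $i$ equals the sign of $(-1)^{i+1}\sigma_i$. Since the parity factor $(-1)^{i+1}$ flips at every step, two cyclically-consecutive vertices $\mathbf{f}_i$ and $\mathbf{f}_{i+1}$ share the same color if and only if $\sigma_i \neq \sigma_{i+1}$. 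In particular, the number of monochromatic groups in the cyclic two-coloring equals the number of cyclic sign changes of $\sigma$.

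Next I would count these sign changes, obtaining matching upper and lower bounds of $2$. For the upper bound, I note that $\sigma_i$ records which of the two open half-circles $H^\pm := \{\mathbf{f} \in \SS^1 : \pm\langle \JJ\mathbf{f}, \mathbf{e}\rangle > 0\}$ contains $\mathbf{f}_i$. These half-circles are separated by only two antipodal points, so as $\mathbf{f}$ traverses $\SS^1$ counterclockwise once, it crosses the separating set exactly twice; hence the cyclic discrete sequence $\sigma$ admits at most $2$ sign changes. For the lower bound, I observe that $\sigma$ cannot be constant, for otherwise all $\mathbf{f}_i$ would lie in a common open half-plane of $\RR^2$, making $\langle \sum_{i=1}^{2k} \mathbf{f}_i, \mathbf{e}\rangle \neq 0$ and contradicting \eqref{eq:ac.m2k.index.lower.bound.i}. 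Therefore $\sigma$ has exactly $2$ cyclic sign changes.

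Translating back through the color-parity identity, exactly $2$ of the $2k$ cyclic consecutive pairs $(\mathbf{f}_i, \mathbf{f}_{i+1})$ present a color change, so the remaining $2k-2$ pairs are monochromatic and the cyclic two-coloring has precisely $2k-2$ monochromatic groups --- in particular, at least $2k-2$, as claimed. I do not anticipate a substantive obstacle; the only subtle bookkeeping is the parity-flip translation between ``same color'' and ``different hemisphere'', and the invocation of $\sum_{i=1}^{2k} \mathbf{f}_i = \mathbf{0}$ to rule out the degenerate case where $\sigma$ stays constant.
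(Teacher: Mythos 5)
Your approach is genuinely different from the paper's and, once a bookkeeping inversion is fixed, it works and is arguably cleaner. The paper does not count hemispheres directly: it proves two geometric claims locating $\mathbf{f}_{2\ell-1}$ (resp.\ $\mathbf{f}_{2\ell}$) on explicit arcs whenever an adjacent pair is monochromatic, deduces from these the exclusions \eqref{eq:ac.m2k.index.lower.bound.v}--\eqref{eq:ac.m2k.index.lower.bound.vi}, and finishes by a case analysis on whether three consecutive vertices share a color. Your factorization of the color as (parity) $\times$ (hemisphere sign $\sigma_i$), together with the observation that a cyclically ordered tuple of points on $\SS^1$ can straddle a fixed antipodal pair of boundary points at most twice, replaces all of that; and the identity $\sum_i \mathbf{f}_i = \mathbf{0}$ rules out the constant-$\sigma$ case exactly as you say (modulo the harmless slip that the relevant open half-plane is $\{\mathbf{x} : \langle \JJ\mathbf{x},\mathbf{e}\rangle>0\}$ rather than $\{\mathbf{x} : \langle\mathbf{x},\mathbf{e}\rangle>0\}$; either way the sum would be nonzero).

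However, your final translation step is inverted and, as written, internally inconsistent. By your own equivalence, $\mathbf{f}_i$ and $\mathbf{f}_{i+1}$ share a color iff $\sigma_i \neq \sigma_{i+1}$; since $\sigma$ has exactly $2$ cyclic sign changes, there are exactly $2$ \emph{monochromatic} adjacent pairs and exactly $2k-2$ pairs exhibiting a \emph{color change} --- the opposite of what you assert in the last paragraph. Likewise, ``the number of monochromatic groups equals the number of cyclic sign changes of $\sigma$'' is false: for a nonconstant cyclic coloring, the number of maximal monochromatic runs equals the number of color changes, i.e.\ $2k$ minus the number of sign changes of $\sigma$. Note that your stated counts cannot coexist with your conclusion: two color changes would produce exactly two groups, not $2k-2$. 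The corrected chain is: exactly $2$ sign changes of $\sigma$ $\Rightarrow$ exactly $2$ monochromatic adjacent pairs $\Rightarrow$ exactly $2k-2$ color changes $\Rightarrow$ exactly $2k-2$ maximal groups, the two monochromatic pairs forming either a single run of length three or two runs of length two --- precisely the dichotomy in the paper's case analysis. With that repair the claim follows, and your route has the advantage of needing neither the two preceding geometric claims nor the angle condition \eqref{eq:ac.m2k.index.lower.bound.iv}.
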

\begin{proof}[Proof of claim]
	Within the space of valid colorings,
	\begin{equation} \label{eq:ac.m2k.index.lower.bound.v}
		\{\text{existence of blue } \mathbf{f}_{2\ell-1}, \mathbf{f}_{2\ell} \} \cap \{ \text{existence of red } \mathbf{f}_{2m}, \mathbf{f}_{2m+1} \} = \emptyset \text{.}
	\end{equation}
	This follows by combining the previous two claims. Likewise
	\begin{equation} \label{eq:ac.m2k.index.lower.bound.vi}
		\{\text{existence of red } \mathbf{f}_{2\ell-1}, \mathbf{f}_{2\ell} \} \cap \{ \text{existence of blue } \mathbf{f}_{2m}, \mathbf{f}_{2m+1} \} = \emptyset \text{.}
	\end{equation}
	There are now the following cases to consider:
	\begin{enumerate}
		\item There exist three consecutive same-colored vertices. Then, by combining the previous two claims and engaging in elementary angle-chasing, it follows that there do not exist any more consecutive same-colored vertices. In this case, it follows that there are precisely $2k-2$ groups of consecutive same-colored vertices.
		\item There are no three consecutive same-colored vertices. Then, together with \eqref{eq:ac.m2k.index.lower.bound.v}, \eqref{eq:ac.m2k.index.lower.bound.vi}, it follows that there are at least $2k-2$ groups of consecutive same-colored vertices.
	\end{enumerate}
	This concludes the proof of the claim.
\end{proof}

Given this claim, differentiate \eqref{eq:ac.pde} in the direction of $\mathbf{e} \in \SS^1$. We see that $v \triangleq \langle \nabla u, \mathbf{e} \rangle$ satisfies
\begin{equation} \label{eq:ac.m2k.index.lower.bound.vii}
	\Delta v = W''(u) v \text{ in } \RR^2 \text{.}	
\end{equation}

Define
\begin{multline*}
	\cN \triangleq \{ v = 0 \} \text{ (the ``nodal set''), and } \\
	\cS \triangleq \cN \cap \{ \nabla v = \mathbf{0} \} \text{ (the ``singular set'').}
\end{multline*}
By the implicit function theorem, $\cN \setminus \cS$ consists of smooth, injectively immersed curves in $\RR^2$. By \cite{BersJohnSchechter79}, $\cS$ consists of at most countably many points and, for each $p \in \cS$, there exists $r = r(p)$ such that, up to a diffeomorphism of $B_r(p)$,
\begin{multline} \label{eq:ac.m2k.index.lower.bound.viii}
	\cN \cap B_r(p) \approx \text{the zero set of a} \\
	\text{homogeneous even-degree harmonic polynomial.}
\end{multline}

Denote $\Omega_1, \ldots, \Omega_q \subset \RR^2 \setminus \cN$ the nodal domains (i.e., connected components of $\{ v \neq 0 \}$), labeling so that $\Omega_1, \ldots, \Omega_p$ are the unbounded ones, and $\Omega_{p+1}, \ldots, \Omega_q$ are the bounded ones. By virtue of our precise understanding of $\cN$, $\cS$, as discussed above, we know that they are all open, connected, Lipschitz domains.

\begin{rema} \label{rema:ac.m2k.index.lower.bound.q.finite}
	The notation used here implicitly asserts that there are finitely many nodal domains. This follows, a posteriori, by the proof of the following claim and \cite[Theorem 2.8]{KowalczykLiuPacard12}.
\end{rema}

\begin{clai}
	$\ind(u) \geq q-1$.
\end{clai}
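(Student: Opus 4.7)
The plan is a generalized Courant nodal-domain argument. For each $j = 1, \ldots, q$, set $v_j := v \cdot \indt{\Omega_j}$; since $v$ vanishes continuously on $\partial\Omega_j$, these are Lipschitz functions in $W^{1,2}_\loc(\RR^2)$ with pairwise disjoint supports. Take a standard cutoff $\eta_R$ ($\eta_R \equiv 1$ on $B_R$, $\eta_R \equiv 0$ off $B_{2R}$, $\lip \eta_R \leq 2/R$). Integration by parts, using the Jacobi equation $(-\Delta + W''(u))v = 0$ in $\Omega_j$ and $v|_{\partial\Omega_j} = 0$, yields
\[ \delta^2 E[u]\{\eta_R v_j, \eta_R v_j\} = \int |\nabla\eta_R|^2 v_j^2 \leq \frac{C}{R^2}\int_{B_{2R}} |\nabla u|^2 = O(R^{-1}), \]
using the standard linear energy growth $\int_{B_R}|\nabla u|^2 \leq CR$ for $u \in \cM_{2k}$ together with $|v_j| \leq |\nabla u|$. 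Disjoint supports give $\delta^2 E[u]\{\eta_R v_i, \eta_R v_j\} = 0$ for $i \neq j$. The span $\lspan\{\eta_R v_j\}_{j=1}^q$ is thus a $q$-dimensional subspace on which the quadratic form is diagonal with entries $O(R^{-1})$; this alone does not give strict negativity.

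To extract $q-1$ strictly negative directions, I would perturb. For each $j = 1, \ldots, q-1$ (dropping $j = q$), pick a distinct regular nodal point $p_j \in \partial\Omega_j$ and a small disk $D_j$ around $p_j$ straddling $\cN$; let $\phi_j$ be a smooth bump supported in $D_j$, of the same sign as $v$ on $\Omega_j$, strictly so on a piece of $\partial\Omega_j \cap D_j$. Set $\psi_j := \eta_R v_j + \varepsilon \phi_j$. A direct integration by parts (using $v|_{\partial\Omega_j} = 0$ to kill one boundary term but picking up a nontrivial normal-derivative boundary contribution from the jump of $\nabla v_j$) gives, for $\phi_j$ supported where $\eta_R \equiv 1$,
\[ \delta^2 E[u]\{\eta_R v_j, \phi_j\} = \int_{\partial\Omega_j \cap D_j} \phi_j\, \partial_\nu v\, d\sH^1 = -\int_{\partial\Omega_j \cap D_j} |\phi_j|\, |\partial_\nu v|\, d\sH^1 < 0, \]
with $\nu$ the outer normal of $\Omega_j$. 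Consequently
\[ \delta^2 E[u]\{\psi_j, \psi_j\} = O(R^{-1}) - 2\varepsilon \int_{\partial\Omega_j \cap D_j} |\phi_j|\, |\partial_\nu v|\, d\sH^1 + \varepsilon^2 \delta^2 E[u]\{\phi_j, \phi_j\}, \]
which is $<0$ for $\varepsilon$ small and $R$ large.

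To ensure disjointness, pick each $p_j \in \partial\Omega_j \cap \partial\Omega_q$ (feasible in the generic case that $\Omega_q$ is adjacent to all other nodal domains) and disjoint disks $D_j$. Then the support of $\psi_j$ lies in $\Omega_j \cup D_j \subset \Omega_j \cup \Omega_q$, so for $i \neq j$ in $\{1, \ldots, q-1\}$ the supports of $\psi_i$ and $\psi_j$ are disjoint, forcing $\delta^2 E[u]\{\psi_i, \psi_j\} = 0$. Hence $\lspan\{\psi_j\}_{j=1}^{q-1} \subset W^{1,2}_0(B_{2R})$ is a $(q-1)$-dimensional subspace on which $\delta^2 E[u]$ is diagonal and negative definite, so $\ind(u) \geq q - 1$.

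The main obstacle is the combinatorial requirement that $\Omega_q$ be adjacent to every other nodal domain, which may fail (e.g., in $2\times 2$ grid-type configurations). In the general case one selects the $q-1$ enlargement sites via a spanning-tree argument on the nodal-domain adjacency graph, distributing the push directions to avoid conflicts; some additional bookkeeping is then needed to ensure that the Gram matrix of $\delta^2 E[u]$ on $\lspan\{\psi_j\}$ stays negative definite when the supports necessarily overlap, but the essential mechanism — one strictly negative contribution per edge of the tree — is preserved.
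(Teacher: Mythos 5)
Your strategy is genuinely different from the paper's. The paper counts a nullity contribution from each \emph{bounded} nodal domain (the restriction of $v$ itself is an $L^2$ kernel element there), extracts one unstable direction from each consecutive pair of \emph{unbounded} nodal domains via the Ghoussoub--Gui-type Lemma \ref{lemm:app.pde.unstable.past.nodal.domain}, and then assembles these through the noncompact Courant theorem (Theorem \ref{theo:app.pde.courant.nodal.domain}), whose proof runs through orthogonality to the finitely many negative eigenfunctions of Proposition \ref{prop:app.pde.negative.eigenfunction.representation} plus unique continuation --- so no Gram matrix ever has to be diagonalized. You instead try to exhibit a $(q-1)$-dimensional negative-definite subspace by hand, perturbing each truncated nodal piece $\eta_R v_j$ by a boundary bump. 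Your computations in the special case are correct: $\delta^2E[u]\{\eta_R v_j,\eta_R v_j\}=\int\Vert\nabla\eta_R\Vert^2v_j^2=O(R^{-1})$ (linear energy growth for $u\in\cM_{2k}$ follows from Theorem \ref{theo:ac.m2k.refined.asymptotics}, or one can use the logarithmic cutoffs of Lemma \ref{lemm:app.pde.log.cutoff.functions} and boundedness of $v$), and the cross term $\int_{\partial\Omega_j\cap D_j}\phi_j\,\partial_\nu v\,d\cH^1<0$ at a regular arc is the right mechanism. When a single nodal domain $\Omega_q$ is adjacent to all others, the form really is diagonal on $\lspan\{\psi_j\}$ and the argument closes.

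The general case, however, is not ``bookkeeping'': it is the actual difficulty, and your sketch does not resolve it. Root a spanning tree of the adjacency graph at $\Omega_q$ and let each $\psi_j$ poke into its parent. For a parent--child pair $i=\mathrm{parent}(j)$ with $i\neq q$, the off-diagonal entry is
$\delta^2E[u]\{\psi_j,\psi_i\}=\varepsilon\int_{\partial\Omega_i\cap D_j}\phi_j\,\partial_{\nu_i}v\,d\cH^1=+\varepsilon c_j$,
i.e.\ \emph{exactly} the same interface integral that produces the diagonal gain $-2\varepsilon c_j$, with the normal reversed (and it is positive, since $\phi_j$ and $\partial_{\nu_i}v$ both have sign opposite to $v|_{\Omega_i}$). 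After normalizing $c_j\equiv1$, the leading-order Gram matrix is $\varepsilon(-2I+A')$, where $A'$ is the adjacency matrix of the rooted forest on $\{1,\dots,q-1\}$. This is indefinite as soon as some vertex has five or more children ($\lambda_{\max}(K_{1,n})=\sqrt n>2$), a configuration that nothing in the structure of $\cN$ forbids. So the construction as stated can fail to produce a negative-definite subspace. It can be repaired --- e.g.\ by letting the bump amplitudes $\varepsilon_j$ decay geometrically with tree depth to restore diagonal dominance, or, as the paper does, by abandoning the Gram matrix altogether in favor of the orthogonality argument of Proposition \ref{prop:app.pde.negative.eigenfunction.representation}, under which one only needs each piece to carry index or nullity $\geq1$ separately --- but as written the general case is a genuine gap, not a detail.
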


\begin{proof}[Proof of claim]
	First, it's standard that for every bounded nodal domain $\Omega_{p+1}, \ldots, \Omega_q$ we have
	\begin{equation} \label{eq:ac.m2k.index.lower.bound.ix}
		\nul(u; \Omega_i) \geq 1 \text{ for all } i = p+1, \ldots, q \text{.}
	\end{equation}
	Now we move on to unbounded nodal domains. It is not hard to see that we have at least two such. Suppose that $\Omega_1$ is an unbounded nodal domain, and suppose $\Omega_2$ is its counterclockwise neighboring unbounded nodal domain. By \eqref{eq:ac.m2k.index.lower.bound.viii}, $v$ attains opposite signs on $\Omega_1$, $\Omega_2$. Thus, $v$ is a bounded, sign-changing Jacobi field in $\Omega_{12} \triangleq \intr{\closure{\Omega}_1 \cup \closure{\Omega}_2}$, which is itself an open, connected, unbounded Lischitz domain. By Lemma \ref{lemm:app.pde.unstable.past.nodal.domain}, $\ind(u; \Omega_{12}) \geq 1$. Since $\Omega_{12}$ is unbounded, we have
	\[ \ind(u; \widetilde{\Omega}_2) \geq 1 \text{ for some bounded } \widetilde{\Omega}_2 \subsetneq \Omega_{12} \]	
	which is itself open, connected, and Lipschitz. Denote $\widetilde{\Omega}_1 = \emptyset$.
	
	Proceeding similarly (and labeling accordingly) in the counterclockwise direction, we can construct disjoint, bounded, open, connected, Lipschitz $\widetilde{\Omega}_3, \ldots, \widetilde{\Omega}_p$, 
	\begin{equation} \label{eq:ac.m2k.index.lower.bound.x}
		\ind(u; \widetilde{\Omega}_i) \geq 1 \text{ for all } i = 2, \ldots, p \text{,}
	\end{equation}
	where $\widetilde{\Omega}_i \subset \intr{(\closure{\Omega}_{i-1} \cup \closure{\Omega}_i)}$. More precisely, at each stage $i$ we have to sacrifice a bounded portion of $\closure{\Omega}_1 \cup \cdots \cup \closure{\Omega}_{i-1} \setminus (\widetilde{\Omega}_1 \cup \cdots \cup \widetilde{\Omega}_{i-1})$ to give rise to a negative eigenvalue on a slight enlargement of $\Omega_i$, which is bounded and disjoint from $\widetilde{\Omega}_1 \cup \cdots \cup \widetilde{\Omega}_{i-1}$.
	
	The claim follows by combining \eqref{eq:ac.m2k.index.lower.bound.ix}, \eqref{eq:ac.m2k.index.lower.bound.x}, and Theorem \ref{theo:app.pde.courant.nodal.domain}.
\end{proof}

We now estimate $q-1$ from below. It will be convenient to assume that $\cS$ and the set of connected components of $\cN \setminus \cS$ are both finite sets---refer to Remark \ref{rema:ac.m2k.index.lower.bound.infinities} for the minor necessary adjustments to deal with the general case. From Euler's formula for planar graphs, we know that
\begin{equation} \label{eq:ac.m2k.index.lower.bound.xi}
	q = 1 + |\{ \text{connected components of } \cN \setminus \cS \}| - |\cS| \text{,}
\end{equation}
where $|\cdot|$ denotes the cardinality of a set. By \eqref{eq:ac.m2k.index.lower.bound.viii}, every connected component $\Gamma$ of $\cN \setminus \cS$ is a smooth curve with
\begin{equation} \label{eq:ac.m2k.index.lower.bound.xii}
	|\partial \Gamma| = 0, 1, \text{ or } 2 \text{,}
\end{equation}
depending on whether $\Gamma$ is infinite in both directions, one direction, or is finite. Counting the set of pairs $(v, e)$ of vertices and edges in $\cN$ in two ways, we see that

\begin{clai}
	$q \geq k$.
\end{clai}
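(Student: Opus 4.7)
The plan is to combine the Euler relation \eqref{eq:ac.m2k.index.lower.bound.xi} with a handshake count on the planar graph $(\cS, E)$, where $E$ denotes the set of connected components of $\cN\setminus\cS$. The decisive input will be translating the $2k-2$ color changes along $\SS^1$ into at least $2k-2$ infinite ends of that graph.

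\emph{Step 1: from colors to signs of $v$.} On $S(\mathbf{f}_i,\delta/2,R)$, the asymptotic \eqref{eq:ac.m2k.index.lower.bound.iii} gives $v\approx\Vert\nabla u\Vert\,\langle(-1)^{i+1}\JJ\mathbf{f}_i,\mathbf{e}\rangle$, whose sign is exactly the color of $\mathbf{f}_i$. In the ``phase'' regions between adjacent $S(\mathbf{f}_i,\delta/2,R)$ and $S(\mathbf{f}_{i+1},\delta/2,R)$, Theorem \ref{theo:ac.m2k.refined.asymptotics} forces $u$ to be exponentially close to $\pm1$, so $v$ is exponentially small. Consequently, on every sufficiently large circle $\partial B_\rho$, each of the $\geq 2k-2$ color changes along $\SS^1$ contributes at least one sign change of $v|_{\partial B_\rho}$. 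Promoting these circle-zeros to the level of $\cN\setminus\cS$ (via continuity in $\rho$ and the local structure \eqref{eq:ac.m2k.index.lower.bound.viii} at singular points), I would extract a lower bound $N_\infty\geq 2k-2$ on the total number of infinite ends of components of $\cN\setminus\cS$.

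\emph{Step 2: handshake.} Partition $E = E_2\sqcup E_1\sqcup E_0$ by the number of finite ends landing in $\cS$, and split $E_0 = E_0^{\inf}\sqcup E_0^{\text{loop}}$ according to whether the component escapes to infinity or is a closed loop. By \eqref{eq:ac.m2k.index.lower.bound.viii}, at each $p\in\cS$ the nodal set locally consists of $2d(p)\geq 2$ concurrent smooth arcs, hence $\geq 4$ arcs emanate from $p$; therefore $2|E_2|+|E_1|=\sum_{p\in\cS}\deg(p)\geq 4|\cS|$. By construction, $|E_1|+2|E_0^{\inf}|=N_\infty$. Using $|E_2|\geq 2|\cS|-\tfrac{1}{2}|E_1|$ in $|E|-|\cS|=|E_2|+|E_1|+|E_0|-|\cS|$ and discarding nonnegative terms,
\[ |E|-|\cS| \;\geq\; |\cS| + \tfrac{1}{2}|E_1| + |E_0| \;\geq\; \tfrac{1}{2}|E_1| + |E_0^{\inf}| \;=\; \tfrac{1}{2}N_\infty \;\geq\; k-1, \]
and substituting into \eqref{eq:ac.m2k.index.lower.bound.xi} gives $q\geq k$.

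The main obstacle is making Step 1 rigorous. The subtle point is that the zeros of $v|_{\partial B_\rho}$ produced by color changes live in the narrow phase strips where $v$ is only exponentially small, and a priori they could drift with $\rho$, oscillate, or merge through singular points in $\cS$ before becoming honest unbounded branches. Ruling this out should require the exponential control from Theorem \ref{theo:ac.m2k.refined.asymptotics} together with the polynomial structure of $\cN$ near $\cS$ in \eqref{eq:ac.m2k.index.lower.bound.viii}, which force any accumulation of zeros in the asymptotic region to resolve into $\geq 2k-2$ genuinely distinct unbounded components of $\cN\setminus\cS$.
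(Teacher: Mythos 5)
Your proof follows essentially the same route as the paper: the $\geq 2k-2$ color changes are converted into $\geq 2k-2$ unbounded branches of $\cN \setminus \cS$, and then Euler's formula \eqref{eq:ac.m2k.index.lower.bound.xi} is combined with exactly the double count you call the handshake (the paper's \eqref{eq:ac.m2k.index.lower.bound.xiii}--\eqref{eq:ac.m2k.index.lower.bound.xvi}, using the weaker but sufficient incidence bound $|\cA| \geq 2|\cS|$ where you use $\deg(p) \geq 4$). The subtlety you flag in Step 1 is treated in the paper at the same level of detail---it is simply asserted that for $R$ large with $\cS \subset B_R(\mathbf{0})$ the set $\cN \setminus B_R(\mathbf{0})$ has at least $2k-2$ components---so your proposal matches the paper's argument.
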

\begin{proof}
	The fact that there exist at least $2k-2$ groups of consecutive same-colored vertices implies that there exists $R > 0$ sufficiently large so that $\cS \subset B_R(\mathbf{0})$ and $\cN \setminus B_R(\mathbf{0})$ has at least $2k-2$ components. By a straightforward counting argument combined with \eqref{eq:ac.m2k.index.lower.bound.xii}, this implies
	\begin{equation} \label{eq:ac.m2k.index.lower.bound.xiii}
		2k-2 \leq \sum_{\ell=0}^2 (2-\ell) \cdot |\{ \text{connected components } \Gamma \subset \cN \setminus \cS : |\partial \Gamma| = \ell \}| \text{.}
	\end{equation}
	On the other hand, by counting the elements of the set
	\[ \cA \triangleq \{ (p, \Gamma) : p \in \cS, \; \Gamma =  \text{connected component of } \cN \setminus \cS \text{ incident to } p \} \]
	in one way, we find that
	\begin{equation} \label{eq:ac.m2k.index.lower.bound.xiv}
		|\cA| = \sum_{\ell=0}^2 \ell \cdot |\{ \text{connected components } \Gamma \subset \cN \setminus \cS : |\partial \Gamma| = \ell \}| \text{.}
	\end{equation}
	Adding \eqref{eq:ac.m2k.index.lower.bound.xiii}, \eqref{eq:ac.m2k.index.lower.bound.xiv}, and rearranging, we get
	\begin{multline} \label{eq:ac.m2k.index.lower.bound.xv}
		2 \cdot |\{ \text{connected components of } \cN \setminus \cS \}| \geq |\cA| + 2k-2 \\
		\iff |\{ \text{connected components of } \cN \setminus \cS \}| \geq \frac{1}{2} |\cA| + k-1 \text{.}
	\end{multline}
	Plugging \eqref{eq:ac.m2k.index.lower.bound.xv} into \eqref{eq:ac.m2k.index.lower.bound.xi} yields the estimate
	\begin{equation} \label{eq:ac.m2k.index.lower.bound.xvi}
		q \geq k + \frac{1}{2} |\cA| - |\cS| \text{.}
	\end{equation}
	On the other hand, because of \eqref{eq:ac.m2k.index.lower.bound.viii}, each $p \in \cS$ contributes at least two elements to $\cA$; i.e., $|\cA| \geq 2 \cdot |\cS|$. The claim follows.
\end{proof}

\begin{rema} \label{rema:ac.m2k.index.lower.bound.infinities}
	The proof above assumed that
	\[ |\cS| + |\{ \text{connected components of } \cN \setminus \cS \}| < \infty \text{,} \]
	so let us discuss the necessary adjustments for it to go through in the general case. By the finiteness of $q$ (see Remark \ref{rema:ac.m2k.index.lower.bound.q.finite}), we know that there exists a large enough radius $R$ so that $\Omega_i \cap B_R(\mathbf{0})$ is connected and nonempty for every $i = 1, \ldots, q$. By the local finiteness of $\cS$, we may further arrange for $\partial B_R(\mathbf{0}) \cap \cS = \emptyset$ and for all intersections $\partial \Omega_i \cap \partial B_R(\mathbf{0})$, $i = 1, \ldots, 2k$, to be transverse. The finite planar graph arrangement contained within $B_R(\mathbf{0})$ has the same number of faces as the original infinite planar graph arrangement. We may, therefore, repeat the previous proof, starting at Remark \ref{rema:ac.m2k.index.lower.bound.q.finite}, discarding all elements of $\cS$ and components of $\cN \setminus \cS$ that lie fully outside of $B_R(\mathbf{0})$, and identifying $\partial B_R(\mathbf{0})$ with infinity.
\end{rema}

Combining everything above, we obtain the thesis of Theorem \ref{theo:ac.m2k.index.lower.bound}.

\appendix

\section{Morse index in unbounded regions on $\RR^2$} \label{sec:app.pde.morse.index.spaces.quadratic.area.growth}

In this section we will study Schr\"odinger operators
\begin{equation} \label{eq:app.pde.schroedinger}
	L \triangleq -\Delta + V, \text{ where } V \in C^\infty_{\loc}(\RR^2) \cap L^\infty(\RR^2)
\end{equation}
on $\RR^2$. Our goal is to extend certain known results on eigenvalues of Schr\"odinger operators on compact domains to a noncompact setting that arises in context of this paper. 

The following lemma is classical:

\begin{lemm}[Logarithmic cutoff functions] \label{lemm:app.pde.log.cutoff.functions}
	For every $R > 0$, there exists $\xi_R \in W^{1,\infty}(\RR^2) \cap C^0_c(\RR^2)$ such that
	\begin{enumerate}
		\item $0 \leq \xi_R \leq 1$, $\xi_R \equiv 1$ on $B_R$, $\xi_R \equiv 0$ outside $B_{R^2}$,
		\item $\lim_{R \uparrow \infty} \Vert \nabla \xi_R \Vert_{L^2} = 0$, and
		\item $\lim_{R \uparrow \infty} \Vert \nabla \xi_R \Vert_{L^\infty} = 0$.
	\end{enumerate}
\end{lemm}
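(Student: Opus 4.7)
The plan is to build $\xi_R$ by explicit logarithmic interpolation in the annulus $B_{R^2} \setminus B_R$, then verify the three properties by direct computation. Concretely, for $R > 1$ I would set
\[
	\xi_R(\mathbf{x}) \triangleq \begin{cases} 1 & \text{if } \Vert \mathbf{x} \Vert \leq R, \\ 2 - \dfrac{\log \Vert \mathbf{x} \Vert}{\log R} & \text{if } R \leq \Vert \mathbf{x} \Vert \leq R^2, \\ 0 & \text{if } \Vert \mathbf{x} \Vert \geq R^2. \end{cases}
\]
The three branches agree on $\{\Vert \mathbf{x}\Vert = R\}$ and $\{\Vert \mathbf{x}\Vert = R^2\}$, so $\xi_R$ is continuous on $\RR^2$; it is supported in $\overline{B}_{R^2}$, so $\xi_R \in C^0_c(\RR^2)$. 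Since $\xi_R$ is smooth on each of the three open pieces and Lipschitz across the two interfaces, $\xi_R \in W^{1,\infty}(\RR^2)$, and clearly $0 \leq \xi_R \leq 1$ with $\xi_R \equiv 1$ on $B_R$. This immediately settles item (1).

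For items (2) and (3), I would compute $\nabla \xi_R$ explicitly on the annulus $A_R \triangleq B_{R^2} \setminus \overline{B}_R$, where
\[
	\nabla \xi_R(\mathbf{x}) = -\frac{1}{\log R} \cdot \frac{\mathbf{x}}{\Vert \mathbf{x}\Vert^2}, \quad \text{so} \quad \Vert \nabla \xi_R(\mathbf{x})\Vert = \frac{1}{\Vert \mathbf{x}\Vert \log R},
\]
and $\nabla \xi_R \equiv 0$ on $B_R$ and on $\RR^2 \setminus \overline{B}_{R^2}$. The $L^\infty$ bound is maximized at $\Vert \mathbf{x}\Vert = R$, giving $\Vert \nabla \xi_R\Vert_{L^\infty} = \frac{1}{R \log R}$, which vanishes as $R \uparrow \infty$. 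For the $L^2$ bound, passing to polar coordinates,
\[
	\Vert \nabla \xi_R\Vert_{L^2}^2 = \int_{A_R} \frac{1}{\Vert \mathbf{x}\Vert^2 (\log R)^2} \, d\mathbf{x} = \frac{2\pi}{(\log R)^2} \int_R^{R^2} \frac{dr}{r} = \frac{2\pi \log R}{(\log R)^2} = \frac{2\pi}{\log R} \longrightarrow 0.
\]

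There is no real obstacle: the construction is completely elementary, with the only mild subtlety being that one should not try to make $\xi_R$ smooth at the interfaces $\Vert \mathbf{x}\Vert = R, R^2$, since the $W^{1,\infty}$ framework of the statement tolerates corners and any smoothing by mollification of the radial profile (over a thin shell) would only change the estimates by a constant factor. The essential point is the well-known fact that the function $\log \Vert \mathbf{x}\Vert$ has the marginal logarithmic failure of being in $W^{1,2}$ on a single annulus, so interpolating over the dyadic-type range $R \leq \Vert \mathbf{x}\Vert \leq R^2$ produces a Dirichlet energy that tends to zero like $1/\log R$.
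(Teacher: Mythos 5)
Your construction is exactly the one the paper uses (the logarithmic interpolation $2 - \log\Vert\mathbf{x}\Vert/\log R$ on the annulus $B_{R^2}\setminus B_R$), and your verification is correct; the only difference is that you evaluate the Dirichlet energy directly in polar coordinates, obtaining the clean value $2\pi/\log R$, whereas the paper reaches the same decay via the coarea formula and integration by parts. The proof is correct and essentially identical to the paper's.
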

\begin{proof}
	We have already prescribed the behavior of $\xi_R$ of $B_R$ and $\RR^2 \setminus B_{R^2}$, so it remains to define it on $B_{R^2} \setminus B_R$. We do so as
	\[ \xi_R(x) \triangleq 2 - \frac{\log r}{\log R}, \]
	where we write $r = r(x)$ for $\dist(x, 0)$. Such a function satisfies $\xi_R \in W^{1,\infty}(\RR^2) \cap C^0_c(\RR^2)$, and
	\[ \nabla \xi_R = - \frac{\nabla r}{r \log R} \text{ a.e. on } \RR^2 \text{,} \]
	and, therefore, that $\lim_{R\uparrow\infty} \Vert \nabla \xi_R \Vert_{\infty} = 0$. From the coarea formula and integration by parts, one sees that
	\begin{align*}
		\int_{\RR^2} \Vert \nabla \xi_R \Vert^2 \, d\cL^2 & = \frac{1}{\log^2 R} \int_{B_{R^2} \setminus B_R} \frac{d\cL^2}{r^2 \log^2 R} \\
			& = \frac{1}{\log^2 R} \int_R^{R^2} \frac{1}{r^2} \cH^1(\partial B_r) \, dr \\
			& = \frac{1}{\log^2 R} \left[ \frac{1}{r^2} \cL^2(B_r) \right]_{r=R}^{R^2} + \frac{2}{\log^2 R} \int_R^{R^2} \frac{\cL^2(B_r)}{r^3} \, dr \text{.}
	\end{align*}
	This evidently decays as $R \uparrow \infty$.
\end{proof}

We recall that the quadratic form associated to $L$ is $\cQ : W^{1,2}(\RR^2) \otimes W^{1,2}(\RR^2) \to \RR$, with
\begin{equation} \label{eq:app.pde.bilinear.form}
	\cQ(\zeta, \psi) \triangleq \int_{\RR^2} \left[ \langle \nabla \zeta, \nabla \psi \rangle + V \zeta \psi \right] \, d\cL^2 \text{, } \zeta, \psi \in W^{1,2}(\RR^2) \text{,}
\end{equation}
and the corresponding Rayleigh quotient is $\cQ : W^{1,2}(\RR^2) \setminus \{ 0 \} \to \RR$, with
\begin{equation} \label{eq:app.pde.rayleigh.quotient}
	\cR[\zeta] \triangleq \frac{\cQ(\zeta, \zeta)}{\Vert \zeta \Vert_{L^2(\RR^2)}^2} \text{, } \zeta \in W^{1,2}(\RR^2) \setminus \{ 0 \} \text{.}
\end{equation}

\begin{defi}[Morse index, nullity]
	Let $L$ be as in \eqref{eq:app.pde.schroedinger}, and suppose $\Omega \subseteq \RR^2$ is an open, connected, Lipschitz domain. We define the Morse index of $L$ on $\Omega$ as
	\begin{multline} \label{eq:app.pde.index}
		\ind(L; \Omega) \triangleq \sup \Big\{ \dim V : V \subset W^{1,2}_0(\Omega) \text{ a subspace such that} \\
			\cQ(\zeta, \zeta) < 0 \text{ for all } \zeta \in W^{1,2}_0(\Omega) \setminus \{ 0 \} \Big\}
	\end{multline}
	and the ($L^2$-) nullity of $L$ on $\Omega$ as
	\begin{equation} \label{eq:app.pde.nullity}
		\nul(L; \Omega) \triangleq \dim \{ u \in W^{1,2}_0(\Omega) : Lu = 0 \text{ weakly in } \Omega \} \text{.}
	\end{equation}
\end{defi}

The Morse index counts the dimensionality of the space instabilities for a particular critical point. Heuristically, this corresponds to the number of negative eigenvalues, counted with multiplicity.

We note two results for negative eigenvalues. First, they cannot be ``too'' negative. Second, when the Morse index is finite, the space of instabilities can be ``exhausted'' by finitely many eigenfunctions, with negative eigenvalue, in a suitable Sobolev space.

\begin{lemm}  \label{lemm:app.pde.negative.eigenvalue.bound}
	Let $L$ be as in \eqref{eq:app.pde.schroedinger}, $\Omega \subseteq \RR^2$ be an open, connected, Lipschitz domain. For every $f \in W^{1,2}_0(\Omega)$, with $Lf = \lambda f$ weakly in $\Omega$, we have
	\[ -\lambda \leq \max\{ 0, - \inf V \} \text{.} \]
\end{lemm}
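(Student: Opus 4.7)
The plan is very short: this is a standard Rayleigh-quotient bound, and everything follows from testing the weak eigenvalue equation against the eigenfunction itself.

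First, I would observe that since $f \in W^{1,2}_0(\Omega)$ and $V \in L^\infty(\RR^2)$, the function $f$ itself is an admissible test function in the weak formulation of $Lf = \lambda f$. This yields
\[ \int_\Omega \bigl[ \Vert \nabla f \Vert^2 + V f^2 \bigr] \, d\cL^2 = \lambda \int_\Omega f^2 \, d\cL^2, \]
i.e., $\cQ(f,f) = \lambda \Vert f \Vert_{L^2(\Omega)}^2$ in the notation of \eqref{eq:app.pde.bilinear.form}.

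If $f \equiv 0$ in $\Omega$ (after extension by zero), the conclusion is vacuous. Otherwise $\Vert f \Vert_{L^2(\Omega)} > 0$, and I would divide through to express $\lambda$ as the Rayleigh quotient $\cR[f]$ of \eqref{eq:app.pde.rayleigh.quotient}. Since $\Vert \nabla f \Vert^2 \geq 0$ pointwise, dropping the gradient term and bounding $V \geq \inf V$ in the remaining integral gives
\[ \lambda = \cR[f] \geq \inf_{\RR^2} V. \]

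To package this as the stated inequality, I would split on the sign of $\inf V$. If $\inf V \geq 0$, then $\cR[f] \geq 0$, so $-\lambda \leq 0 = \max\{0, -\inf V\}$. If $\inf V < 0$, the bound $\lambda \geq \inf V$ rearranges to $-\lambda \leq -\inf V = \max\{0, -\inf V\}$. Either way the claim follows. There is no real obstacle; the only thing to keep an eye on is that admissibility of $f$ as a test function depends on the hypothesis $f \in W^{1,2}_0(\Omega)$ together with $V \in L^\infty$, both of which are in force.
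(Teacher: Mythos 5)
Your proof is correct, and it rests on the same core idea as the paper's: pair the eigenvalue equation with the eigenfunction, discard the nonnegative gradient term, and bound $V$ below by $\inf V$. The only real difference is in execution. You test with $f$ itself in one step, which requires the (standard, and correctly flagged) density argument: the weak formulation is a priori stated against $C^\infty_c(\Omega)$, and extends to test functions in $W^{1,2}_0(\Omega)$ because $\cQ$ and the $L^2$ pairing are continuous on $W^{1,2}_0(\Omega)$ when $V \in L^\infty$, with all integrals finite since $f \in W^{1,2}_0(\Omega)$. The paper instead tests with $\chi_R^2 f$ for compactly supported cutoffs $\chi_R$ with $\Vert \nabla \chi_R \Vert \leq c_0 R^{-1}$, absorbs the cross term $2\chi_R f \langle \nabla \chi_R, \nabla f\rangle$ by Cauchy--Schwarz, and sends $R \uparrow \infty$; the error term $\int f^2 \Vert \nabla \chi_R \Vert^2 \, d\cL^2 \lesssim R^{-2}$ vanishes in the limit. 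The cutoff route sidesteps any discussion of enlarging the admissible test class on a possibly unbounded $\Omega$, at the cost of one extra limiting step; your route is shorter and equally rigorous once the density remark is made explicit. Either way the conclusion $-\lambda \leq \max\{0, -\inf V\}$ follows as you describe.
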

\begin{proof}
	Assume, without loss of generality, that $\Vert f \Vert_{L^2(\Omega)} = 1$. Consider cutoff functions $\chi_R \in W^{1,\infty}(\RR^2)$ so that
	\[ \chi_R = 1 \text{ on } B_R, \; \chi_R = 0 \text{ outside } B_{2R}, \; \Vert \nabla \chi_R \Vert \leq c_0 R^{-1} \text{.} \]
	Notice that $\chi_R^2 f \in W^{1,2}_0(\Omega)$, so we may use it as a test function on $Lf = \lambda f$. We get:
	\begin{align*}
		& \int_\Omega \left[ \langle \nabla (\chi_R^2 f), \nabla f \rangle + V\chi_R^2 f^2 \right] \, d\cL^2 = \lambda \int_\Omega \chi_R^2 f^2 \, d\cL^2 \\
		& \qquad \iff \int_\Omega \left[ 2 \chi_R f \langle \nabla \chi_R, \nabla f \rangle + \chi_R^2 \Vert \nabla f \Vert^2 + V \chi_R^2 f^2 \right] \, d\cL^2 \\
		& \qquad \qquad \qquad = \lambda \int_\Omega \chi_R^2 f^2 \, d\cL^2 \text{.}
	\end{align*}
	Using Cauchy-Schwarz on $2 \chi_R f \langle \nabla \chi_R, \nabla f \rangle$,
	\[ -\lambda \int_\Omega \chi_R^2 f^2 \, d\cL^2 \leq \int_\Omega \left[ f^2 \Vert \nabla \chi_R \Vert^2 + \max\{ 0, - \inf V \} \chi_R^2 f^2 \right] \, d\cL^2 \text{.} \]
	The result follows by letting $R \uparrow \infty$.
\end{proof}

The following result should be well-known to experts of elliptic partial differential equations:

\begin{prop}[Negative eigenfunction representation, cf. Fischer-Colbrie {\cite[Proposition 2]{FischerColbrie85}}] \label{prop:app.pde.negative.eigenfunction.representation}
	Let $L$ be as in \eqref{eq:app.pde.schroedinger}, and $\Omega \subseteq \RR^2$ be an open, connected, Lipschitz domain with $k = \ind(L; \Omega) < \infty$. There are $\lambda_1 \leq \ldots \leq \lambda_k < 0$ and $L^2$-orthonormal functions $\varphi_1, \ldots, \varphi_k \in W^{1,2}_0(\Omega) \cap C^\infty_{\loc}(\Omega)$, such that $L \varphi_i = \lambda_i \varphi_i$ weakly in $\Omega$. Moreover,
		\begin{equation} \label{eq:app.pde.stability.inequality}
			\cQ(\zeta, \zeta) \geq 0 \text{ for all } \zeta \in W^{1,2}_0(\Omega) \cap \{ \varphi_1, \ldots, \varphi_k \}^\perp \text{,}
		\end{equation}
		with equality if and only if $\zeta \in \nul(L; \Omega)$; here, $\perp$ is taken with respect to  $L^2(\Omega)$.
\end{prop}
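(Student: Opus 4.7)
I would adapt the classical exhaustion approach (cf.\ Fischer-Colbrie \cite{FischerColbrie85}). Fix an exhaustion $\Omega_1 \subset \Omega_2 \subset \cdots \nearrow \Omega$ by bounded, connected, Lipschitz subdomains. By Rellich-Kondrachov, $L$ with Dirichlet data on each $\Omega_j$ has compact resolvent, so its spectrum is a discrete sequence $\lambda_1^{(j)} \leq \lambda_2^{(j)} \leq \cdots \to \infty$ with $L^2$-orthonormal smooth eigenfunctions $\varphi_i^{(j)}$; the min-max principle identifies $\lambda_i^{(j)}$ with the $i$-th min-max value of $\cR$ on $W^{1,2}_0(\Omega_j)$. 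Trivial extension by zero embeds $W^{1,2}_0(\Omega_j) \hookrightarrow W^{1,2}_0(\Omega)$, so domain monotonicity of min-max values plus the lower bound from Lemma \ref{lemm:app.pde.negative.eigenvalue.bound} imply $\lambda_i^{(j)} \searrow \lambda_i \geq -\Vert V \Vert_{L^\infty}$. Since $\bigcup_j C^\infty_c(\Omega_j) = C^\infty_c(\Omega)$ is dense in $W^{1,2}_0(\Omega)$, $\lambda_i$ is the $i$-th min-max value of $\cR$ on $W^{1,2}_0(\Omega)$, so $\ind(L;\Omega) = k$ forces $\lambda_1 \leq \cdots \leq \lambda_k < 0 \leq \lambda_{k+1}$.

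To produce eigenfunctions, I extend each $\varphi_i^{(j)}$ by zero. The uniform bound $\Vert \nabla \varphi_i^{(j)} \Vert_{L^2}^2 = \lambda_i^{(j)} - \int V (\varphi_i^{(j)})^2 \leq 2 \Vert V \Vert_{L^\infty}$ combined with $L^2$-normalization yields a $W^{1,2}_0(\Omega)$-bounded sequence, and a diagonal weak extraction produces $\varphi_i \in W^{1,2}_0(\Omega)$ for $i = 1, \ldots, k$. Passing to the limit in the weak eigenvalue equation against $C^\infty_c(\Omega)$ test functions (strong $L^2_{\loc}$-convergence handles the zeroth-order term) gives $L \varphi_i = \lambda_i \varphi_i$ weakly on $\Omega$, and interior elliptic regularity upgrades $\varphi_i$ to $C^\infty_{\loc}(\Omega)$.

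The main obstacle is preserving $L^2$-orthonormality in the limit: weak $W^{1,2}$-convergence and only local Rellich compactness a priori permit $L^2$-mass to escape to infinity. The finite Morse index is the saving ingredient. A disjoint-supports argument shows that for every $\epsilon > 0$ there exists $R_0$ with $\cR[\psi] \geq -\epsilon$ for every nonzero $\psi \in W^{1,2}_0(\Omega \setminus \overline{B_{R_0}})$---otherwise, bump functions supported on successive disjoint annular regions with $\cR < -\epsilon$ would span arbitrarily high-dimensional strictly-negative $\cQ$-subspaces of $W^{1,2}_0(\Omega)$, contradicting $\ind < \infty$. Combining this with the IMS-type identity
\[ \cQ(\varphi, \varphi) = \cQ(\chi\varphi, \chi\varphi) + \cQ(\eta\varphi, \eta\varphi) - \int (\Vert \nabla \chi \Vert^2 + \Vert \nabla \eta \Vert^2) \varphi^2 \]
with $\chi^2 + \eta^2 = 1$ and smooth cutoffs built from Lemma \ref{lemm:app.pde.log.cutoff.functions} (so the error term is $\leq c_R$ with $c_R \to 0$), the exterior bound $\cQ(\eta \varphi_i^{(j)}, \eta \varphi_i^{(j)}) \geq -\epsilon \Vert \eta \varphi_i^{(j)} \Vert_{L^2}^2$, and the identity $\cQ(\chi \varphi_i, \chi \varphi_i) = \lambda_i \Vert \chi \varphi_i \Vert_{L^2}^2 + O(c_R)$ obtained by testing $L\varphi_i = \lambda_i \varphi_i$ against $\chi^2 \varphi_i$, followed by weak lower semicontinuity for the passage $j \to \infty$ and then $R \to \infty$, one derives $|\lambda_i| (1 - \Vert \varphi_i \Vert_{L^2}^2) \leq \epsilon$. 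Taking $\epsilon \to 0$ forces $\Vert \varphi_i \Vert_{L^2} = 1$; $L^2$-orthogonality across different $i$ follows from strong $L^2_{\loc}$-convergence of $\chi \varphi_i^{(j)}$ combined with the $L^2$-smallness of $\eta \varphi_i^{(j)}$. Finally, the stability inequality \eqref{eq:app.pde.stability.inequality} is immediate from $\lambda_{k+1} \geq 0$ (equivalent to the $(k+1)$-th min-max value being $\geq 0$ on $W^{1,2}_0(\Omega) \cap \lspan\{\varphi_1, \ldots, \varphi_k\}^\perp$), and the equality case follows from the standard eigenfunction-expansion argument, using that $Lu = 0$ weakly gives $u \in \nul(L; \Omega)$.
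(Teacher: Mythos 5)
Your proposal is correct and follows essentially the same route as the paper's proof: exhaust $\Omega$ by bounded Lipschitz domains, take the Dirichlet eigenfunctions there, and combine the uniform bound $\lambda_{i,R}\le-\mu<0$ with the fact that finite index forces (almost-)stability of $L$ outside a large ball to rule out $L^2$-mass escaping to infinity---your IMS localization is a repackaging of the computation the paper performs by testing the eigenvalue equation against $(1-\xi_{S^{1/2}})^2\varphi_{i,R}$. The two points you leave as assertions are easily repaired but deserve a word: the stability inequality follows not from ``$\lambda_{k+1}\ge 0$'' alone but from the observation that $\cQ(\varphi_i,\zeta)=\lambda_i\langle\zeta,\varphi_i\rangle_{L^2(\Omega)}=0$ for $\zeta\in\{\varphi_1,\dots,\varphi_k\}^\perp$, so a negative direction in that complement would span, together with $\varphi_1,\dots,\varphi_k$, a $(k+1)$-dimensional negative subspace; and for the equality case there is no eigenfunction expansion on a noncompact $\Omega$, so one instead notes that an equality-attaining $\zeta$ minimizes $\psi\mapsto\cQ(\psi,\psi)-\sum_i\lambda_i\langle\psi,\varphi_i\rangle_{L^2(\Omega)}^2$ over $W^{1,2}_0(\Omega)$, hence is a critical point of that functional, hence satisfies $L\zeta=0$ weakly.
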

\begin{proof}
	By definition, if $\ind(L; \Omega) = k < \infty$, then there exists $R_0 > 1$ large enough such that $\ind(L; \Omega \cap B_{R}) = k$ for all $R \geq R_0$. Seeing as to how $\Omega \cap B_R$ is precompact for all $R \geq R_0$, and Lipschitz for a.e. $R \geq R_0$, for a.e. $R \geq R_0$ there exists a sequence of eigenvalues $\lambda_{1,R} < \lambda_{2,R} \leq \ldots \leq \lambda_{k,R} < 0$ 	and corresponding eigenfunctions $\varphi_{1,R}, \varphi_{2,R}, \ldots, \varphi_{k,R} \in W^{1,2}_0(\Omega \cap B_R)$ that are $L^2$-orthonormal. By the monotonicity of Dirichlet eigenvalues, we know that $R \mapsto \lambda_{i,R}$ is decreasing in $R$, for each $i = 1, \ldots, k$. In particular, there exists $\mu > 0$ such that
	\begin{equation} \label{eq:app.pde.negative.eigenfunction.representation.i}
		\lambda_{i,R} \leq -\mu \text{ for all } i = 1, \ldots, k \text{ and a.e. } R \geq R_0 \text{.}
	\end{equation}
	Moreover, from Lemma \ref{lemm:app.pde.negative.eigenvalue.bound}, there exists $M = M(V) \geq 0$ such that
	\begin{equation}
		\lambda_{i,R} \geq -M \text{ for all } i = 1, \ldots, k \text{ and a.e. } R \geq R_0 \text{.}
	\end{equation}
	
	\begin{clai}
		There exist $\lambda_1, \ldots, \lambda_k$ such that $-M \leq \lambda_1 \leq \ldots \leq \lambda_k \leq -\mu$, a corresponding orthonormal sequence $\{ \varphi_i \}_{i=1,\ldots,k} \subset W^{1,2}_0(\Omega)$, and a subsequence $R_\ell \uparrow \infty$ such that 
		\[ \lim_{\ell \to \infty} \lambda_{i,R_\ell} = \lambda_i \text{ and } \lim_{\ell \to \infty} \varphi_{i,R_\ell} = \varphi_i \]
		strongly in $L^2(\Omega)$ and weakly in $W^{1,2}(\Omega)$, with $L\varphi_i = \lambda_i \varphi_i$ weakly in $\Omega$.
	\end{clai}
	\begin{proof}[Proof of claim]
		First, note that $L\varphi_i = \lambda_i \varphi_i$ holding true weakly is a direct consequence of all the previous claims, so it suffices to prove those.
		
		In what follows, suppose that $R_0 < S^{1/2} < R^{1/4}$. We will make use of the logarithmic cutoff functions from Lemma \ref{lemm:app.pde.log.cutoff.functions}. Testing $L\varphi_{i,R} = \lambda_{i,R} \varphi$ in $\Omega \cap B_R$ with $\xi_S^2 \varphi_{i,R}$, we find
		\begin{multline*} 
			\int_{\Omega \cap B_R} \Big[ \xi_S^2(\Vert \nabla \varphi_{i,R} \Vert^2 + V \varphi_{i,R}^2 - \lambda_{i,R}\varphi_{i,R}^2) \\
			- 2 \xi_S \varphi_{i,R} \langle \nabla \xi_S, \nabla \varphi_{i,R} \rangle \Big] \, d\cL^2 = 0 \text{.}
		\end{multline*}
		Using Cauchy-Schwarz on the last term, $\lambda_{i,R} \leq 0$, $V \in L^\infty$, and conclusions (1) and (3) of Lemma \ref{lemm:app.pde.log.cutoff.functions}, we get the estimate
		\begin{multline} \label{eq:app.pde.negative.eigenfunction.representation.ii}
			\int_{\Omega \cap B_S} \Vert \nabla \varphi_{i,R} \Vert^2 \, d\cL^2 \\
			\leq \int_{\Omega \cap B_R} \xi_S^2 \Vert \nabla \varphi_{i,R} \Vert^2 \, d\cL^2 \leq c_0 \int_{\Omega \cap B_R} \Vert \varphi_{i,R} \Vert^2 \, \cL^2 = c_0 \text{,}
		\end{multline}
		for a fixed $c_0 > 0$ that applies for all $R_0 < S^{1/2} < R^{1/4}$, $i = 1, \ldots, k$.
		
		Next, recall that $(1-\xi_{S^{1/2}}) \varphi_{i,R} \in W^{1,2}_0(\Omega \cap B_R \setminus \closure{B}_S)$ and that, by our choice of $R_0$, $\ind(L; \Omega \setminus \closure{B}_S) = 0$. From the minmax characterization of eigenvalues on compact domains, and integration by parts, it follows that
		\begin{align*}
			0 & \leq \int_{\Omega \cap B_R} \left[ \Vert \nabla ((1-\xi_{S^{1/2}}) \varphi_{i,R}) \Vert^2 + V (1-\xi_{S^{1/2}})^2 \varphi_{i,R}^2 \right] \, d\cL^2 \\
				& = \int_{\Omega \cap B_R} (1-\xi_{S^{1/2}})^2 \varphi_{i,R} L\varphi_{i,R} \, d\cL^2 + \int_{\Omega \cap B_R} \Vert \nabla (1-\xi_{S^{1/2}}) \Vert^2 \varphi_{i,R}^2 \, d\cL^2 \text{.}
		\end{align*}
		Recalling that $L\varphi_{i,R} = \lambda_{i,R} \varphi_{i,R}$ classically in $\Omega \cap B_R$,  $\lambda_{i,R} \leq -\mu < 0$, and conclusion (3) of Lemma \ref{lemm:app.pde.log.cutoff.functions}, we obtain, after rearranging,
		\begin{align} 
			\int_{\Omega \cap B_R \setminus \closure{B}_S} \varphi_{i,R}^2 \, d\cL^2 & \leq \int_{\Omega \cap B_R} (1-\xi_{S^{1/2}})^2 \varphi_{i,R}^2 \, d\cL^2 \nonumber \\
				& \leq \mu^{-1} \int_{\Omega \cap B_R} \Vert \nabla (1-\xi_{S^{1/2}}) \Vert^2 \varphi_{i,R}^2 \, d\cL^2 \nonumber \\
				& \leq \mu^{-1} \Vert \nabla \xi_{S^{1/2}} \Vert_{L^\infty(\RR^2)}^2 \int_{\Omega \cap B_R} \varphi_{i,R}^2 \, d\cL^2 \nonumber \\
				& = \mu^{-1} \Vert \nabla \xi_{S^{1/2}} \Vert_{L^\infty(\RR^2)}^2 = c_1(\mu, S) \text{,} \label{eq:app.pde.negative.eigenfunction.representation.iii}
		\end{align}
		where $\lim_{S \uparrow \infty} c_1(\mu, S) = 0$ for all $\mu > 0$, by conclusion (3) of Lemma \ref{lemm:app.pde.log.cutoff.functions}. We can now extract a convergent subsequence by recalling that the embedding $W^{1,2}_0(\Omega \cap B_R) \hookrightarrow L^2(\Omega \cap B_R)$ is compact, \eqref{eq:app.pde.negative.eigenfunction.representation.ii}, and the uniformly decaying exterior  $L^2$ bound \eqref{eq:app.pde.negative.eigenfunction.representation.iii}.
	\end{proof}
	
	Suppose, now, that $\zeta \in C^\infty_c(\Omega)$. 	Fix $n_0$ sufficiently large so that $\support \zeta \subset \subset B_{R_\ell}$ for all $\ell \geq n_0$. Let
	\[ \zeta_\perp \triangleq \zeta - \sum_{i=1}^k \langle \zeta, \varphi_i \rangle_{L^2(\Omega)} \varphi_i \in W^{1,2}_0(\Omega) \cap \{ \varphi_1, \ldots, \varphi_k \rangle^\perp \text{,} \]
	and for $\ell \geq n_0$,
	\begin{multline*}
		\zeta_{\ell,\perp} \triangleq \zeta - \sum_{i=1}^k \langle \zeta, \varphi_{i,R_\ell} \rangle_{L^2(\Omega \cap B_{R_\ell})} \varphi_{i,R_\ell} \\
		\in W^{1,2}_0(\Omega \cap B_{R_\ell}) \cap \{ \varphi_{1,R_\ell}, \ldots, \varphi_{k,R_\ell} \}^\perp \text{,}
	\end{multline*}
	where $\perp$ is taken with respect to $L^2(\Omega)$ and $L^2(\Omega \cap B_{R_\ell})$, respectively. Then, by the strong $L^2$ convergence, and the fact that $\cQ(\zeta_{\ell,\perp}, \zeta_{\ell,\perp}) \geq 0$ from the minmax characterization of eigenvalues in the compact setting and $\ind(L; \Omega \cap B_{R_\ell}) = k$, we have
	\begin{align*} 
		\cQ(\zeta_\perp, \zeta_\perp) & = \cQ(\zeta, \zeta) - \sum_{i=1}^k \lambda_i \langle \zeta, \varphi_i \rangle^2_{L^2(\Omega)} \\
			& = \lim_{\ell \to \infty} \left( \cQ(\zeta, \zeta) - \sum_{i=1}^k \lambda_{i,R_\ell} \langle \zeta, \varphi_{i,R_\ell} \rangle_{L^2(\Omega \cap B_{R_\ell})}^2 \right) \\
			& = \lim_{\ell \to \infty} \cQ(\zeta_{\ell,\perp}, \zeta_{\ell,\perp}) \geq 0 \text{.}
	\end{align*}
	Inequality \eqref{eq:app.pde.stability.inequality} follows from this, since $C^\infty_c(\Omega)$ is dense in $W^{1,2}_0(\Omega)$.
	
	For the rigidity case, we proceed as follows. A function $\zeta \in W^{1,2}_0(\Omega) \cap \{ \varphi_1, \ldots, \varphi_k \}^\perp$ attaining equality in \eqref{eq:app.pde.stability.inequality} will be a global minimizer for
	\[ W^{1,2}_0(\Omega) \ni \psi \mapsto \cQ(\psi_\perp, \psi_\perp) = \cQ(\psi, \psi) - \sum_{i=1}^k \lambda_i \langle \psi, \varphi_i \rangle_{L^2(\Omega)}^2 \text{,} \]
	so it will also be a critical point, i.e.,
	\[ \cQ(\zeta, \psi) - \sum_{i=1}^k \lambda_i \langle \zeta, \varphi_i \rangle_{L^2(\Omega)} \langle \psi, \varphi_i \rangle_{L^2(\Omega)} = 0 \text{ for all } \psi \in W^{1,2}_0(\Omega) \text{;} \]
	however, $\zeta \in W^{1,2}_0(\Omega) \cap \{ \varphi_1, \ldots, \varphi_k \}^\perp$, so $Q(\zeta, \psi) = 0$ for all $\psi$, and the claim follows.
\end{proof}

From this we get:

\begin{theo}[Noncompact Courant nodal domain theorem, cf. Montiel-Ros {\cite[Lemma 12]{MontielRos91}}]\label{theo:app.pde.courant.nodal.domain}
	Suppose the open, connected, Lipschitz domain $\Omega$ can be partitioned into open, connected, disjoint, Lipschitz domains $\Omega_1, \ldots, \Omega_m$. For any $L$ as in \eqref{eq:app.pde.schroedinger},
	\begin{equation} \label{eq:app.pde.courant.nodal.domain.theorem} 
		\ind(L; \Omega) \geq \sum_{i=1}^{m-1} \left( \ind(L; \Omega_i) + \nul(L; \Omega_i) \right) + \ind(L; \Omega_m) \text{.}
	\end{equation}
\end{theo}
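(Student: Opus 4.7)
The plan is to construct a $k$-dimensional subspace of $W^{1,2}_0(\Omega)$ on which $\cQ$ is negative definite, where
\[ k := \sum_{i=1}^{m-1}\bigl(\ind(L;\Omega_i) + \nul(L;\Omega_i)\bigr) + \ind(L;\Omega_m). \]
First I would apply Proposition \ref{prop:app.pde.negative.eigenfunction.representation} on each $\Omega_j$ to extract $L^2$-orthonormal negative eigenfunctions $\{\varphi_i^{(j)}\}_{i=1}^{n_j} \subset W^{1,2}_0(\Omega_j)$ (with $n_j := \ind(L;\Omega_j)$), together with an $L^2$-orthonormal basis $\{\psi_i^{(j)}\}_{i=1}^{\nu_j}$ of $\nul(L;\Omega_j) \subset W^{1,2}_0(\Omega_j)$ for each $j \leq m-1$. (If some $n_j$ or $\nu_j$ is infinite for $j < m$, the estimate is already trivial from the containment $W^{1,2}_0(\Omega_j) \hookrightarrow W^{1,2}_0(\Omega)$ via extension by zero across the Lipschitz boundary.) All of these functions extend by zero to elements of $W^{1,2}_0(\Omega)$.

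I would then set $W := \lspan\{\varphi_i^{(j)}, \psi_i^{(j)}\}$, of dimension $k$. Pairwise disjointness of the $\Omega_j$ kills $\cQ$-cross-terms between functions supported in distinct $\Omega_j$'s, while the weak identity $L\psi_i^{(j)} = 0$ on $\Omega_j$ gives $\cQ(\varphi, \psi_i^{(j)}) = 0$ for all $\varphi \in W^{1,2}_0(\Omega_j)$. Hence $\cQ$ is block-diagonal on $W$ and negative semidefinite, with kernel exactly $W^0 := \lspan\{\psi_i^{(j)}\}$ of dimension $N := \sum_{j<m}\nu_j$.

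The main obstacle is removing the degeneracy along $W^0$, which I would handle via strong unique continuation for the Schr\"odinger operator $L$ (available since $V \in C^\infty_{\loc}$ by \eqref{eq:app.pde.schroedinger}). If some $\psi \in W^0\setminus\{0\}$ satisfied $\cQ(\psi, \eta) = 0$ for every $\eta \in W^{1,2}_0(\Omega)$, then $\psi$ would weakly, and by interior elliptic regularity classically, solve $L\psi = 0$ on $\Omega$ while vanishing identically on the nonempty open set $\Omega_m$; connectedness of $\Omega$ combined with strong unique continuation would then force $\psi \equiv 0$, a contradiction. Consequently the linear map $W^0 \to (W^{1,2}_0(\Omega))^*$ given by $\psi \mapsto \cQ(\cdot, \psi)$ is injective, so a standard linear-algebra argument produces $\phi_1, \ldots, \phi_N \in W^{1,2}_0(\Omega)$ satisfying $\cQ(\phi_\alpha, \psi_\beta) = -\delta_{\alpha\beta}$.

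Finally I would define
\[ V_\epsilon := \lspan\{\varphi_i^{(j)}\} + \lspan\{\psi_\alpha + \epsilon \phi_\alpha : \alpha = 1, \ldots, N\} \]
for small $\epsilon > 0$. Applying the functional $\cQ(\cdot, \psi_\gamma)$ to any linear dependence in $V_\epsilon$ forces the coefficients of the $\psi_\alpha + \epsilon\phi_\alpha$ to vanish (since $\cQ(\varphi_i^{(j)}, \psi_\gamma) = \cQ(\psi_\alpha, \psi_\gamma) = 0$ while $\cQ(\phi_\alpha, \psi_\gamma) = -\delta_{\alpha\gamma}$), whereupon linear independence of $\{\varphi_i^{(j)}\}$ yields $\dim V_\epsilon = k$. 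In this basis the Gram matrix of $\cQ$ has block form with negative-definite top-left block $\operatorname{diag}(\lambda_i^{(j)})$, $O(\epsilon)$ off-diagonal blocks, and bottom-right block $-2\epsilon I + O(\epsilon^2)$ (using $\cQ(\psi_\alpha,\psi_\beta) = 0$ and the duality above); a Schur-complement computation then confirms that this matrix is negative definite for all sufficiently small $\epsilon > 0$, yielding $\ind(L;\Omega) \geq k$.
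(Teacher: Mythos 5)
Your proof is correct, but it runs in the opposite direction from the paper's. The paper argues by contradiction: it invokes Proposition \ref{prop:app.pde.negative.eigenfunction.representation} on $\Omega$ itself to obtain a space $N$ of negative eigenfunctions together with the rigidity statement \eqref{eq:app.pde.stability.inequality} ($\cQ \geq 0$ on $N^\perp$, with equality only for null fields); a dimension count then shows that if \eqref{eq:app.pde.courant.nodal.domain.theorem} failed, some nonzero $f = f_1 + \cdots + f_m$ built from the $N_i \oplus K_i$ and $N_m$ would lie in $N^\perp$, forcing $\cQ(f,f) = 0$, hence $f \in \nul(L;\Omega)$ with $f \equiv 0$ on $\Omega_m$, which unique continuation rules out. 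You instead construct the negative-definite subspace directly: the blocks $\operatorname{diag}(\lambda_i^{(j)})$ account for the index terms, and the null directions are made strictly negative by perturbing with dual test functions $\phi_\alpha$, whose existence rests on the same unique-continuation observation (a global null field supported away from $\Omega_m$ must vanish). Both arguments use Proposition \ref{prop:app.pde.negative.eigenfunction.representation} on the subdomains and unique continuation at the same juncture; your version has the mild advantage of never needing the Proposition (in particular its rigidity clause) on the big domain $\Omega$, at the cost of the Schur-complement bookkeeping at the end, all of which checks out. One cosmetic point: when some $\nu_j = \infty$, the conclusion is not ``trivial from the containment''---nullity of $\Omega_j$ does not embed into the index of $\Omega$ by extension alone; instead, run your main argument on an arbitrarily large finite-dimensional subspace of $\nul(L;\Omega_j)$ to conclude $\ind(L;\Omega) = \infty$.
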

\begin{proof}
	Without loss of generality, we may assume $\ind(L; \Omega) < \infty$. Invoking Proposition \ref{prop:app.pde.negative.eigenfunction.representation} $m+1$ times, we obtain:
	\begin{enumerate}
		\item $N \subset W^{1,2}_0(\Omega)$ such that $\dim N = \ind(L; \Omega)$,
		\item $N_i \subset W^{1,2}_0(\Omega_i)$ such that $\dim N_i = \ind(L; \Omega)$, $i = 1, \ldots, m$.
	\end{enumerate}
	Likewise, let $K_i = \nul(L; \Omega_i)$, $i = 1, \ldots, m-1$.
	
	Suppose, for the sake of contradiction, that \eqref{eq:app.pde.courant.nodal.domain.theorem} were false. By an elementary dimension counting argument, there would exist a choice of $f_i$, $i = 1, \ldots, m$, where $f_i \in N_i \oplus K_i$, $i = 1, \ldots, m-1$, and $f_m \in N_m$, such that $f = f_1 + \ldots + f_m \in N^\perp \setminus \{ 0 \}$, where $\perp$ is taken with respect to $L^2(\Omega)$, and we have extended each $f_i$ by zero on $\Omega \setminus \Omega_i$. By Proposition \ref{prop:app.pde.negative.eigenfunction.representation},
	\begin{equation} \label{eq:app.pde.courant.nodal.domain.i}
		\cQ(f, f) \geq 0 \text{,}	
	\end{equation}
	with equality if and only if $f \in \nul(L; \Omega)$. On the other hand, since $\support f_i \subseteq \closure{\Omega}_i$, and the domains $\Omega_i$ are Lipschitz and partition $\Omega$,
	\begin{equation} \label{eq:app.pde.courant.nodal.domain.ii}
		\cQ(f, f) = \sum_{i=1}^m \cQ(f_i, f_i) \leq 0 \text{,}
	\end{equation}
	with equality if and only if $f_i \in K_i$, $i = 1, \ldots, m-1$, and $f_m = 0$ a.e. From \eqref{eq:app.pde.courant.nodal.domain.i} and \eqref{eq:app.pde.courant.nodal.domain.ii} we see that equality must hold. In particular, $f \in \nul(L; \Omega)$ and $f_m = 0$ a.e. on $N_m$. In particular, since $f = f_1 + \ldots + f_m$ and $f_1, \ldots, f_{m-1}$ have no essential support in $\Omega_m$, it must be that $f = 0$ a.e. in $\Omega_m$. Since $Lf = 0$ weakly on $\Omega$, unique continuation forces $f = 0$ a.e. in $\Omega$, a contradiction to our having chosen $f \in N^\perp \setminus \{ 0 \}$.
\end{proof}

Finally, the following proposition will be very useful in the noncompact setting. It is motivated by Ghoussoub-Gui's original proof of De Giorgi's conjecture in $\RR^2$ \cite[Theorem 1.1]{GhoussoubGui98}.

\begin{lemm} \label{lemm:app.pde.unstable.past.nodal.domain}
	Let $L$ be as in \eqref{eq:app.pde.schroedinger}, and that $u \not \equiv 0$ is a bounded Jacobi field. If $\Omega \subseteq \RR^2$ is an open, connected, Lipschitz domain, such that $u|_{\partial \Omega} \equiv 0$, then on every open, connected, Lipschitz $\Omega' \supsetneq \Omega$, $\ind(L; \Omega') \geq 1$.
\end{lemm}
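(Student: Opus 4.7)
The plan is to produce a single nontrivial $\psi \in W^{1,2}_0(\Omega')$ with $\cQ(\psi,\psi) < 0$, from which $\ind(L;\Omega') \geq 1$ follows by taking $\lspan\{\psi\}$. Let $\tilde u$ denote the extension of $u|_\Omega$ by zero to $\Omega'$. Since $u$ is bounded with $u|_{\partial\Omega}=0$ and $\Omega$ is Lipschitz, $\tilde u \in W^{1,2}_\loc(\Omega')$ with $\nabla \tilde u = (\nabla u)\indt{\Omega}$ weakly. The fundamental identity, familiar from Fischer--Colbrie style arguments, comes from expanding $\cQ(\eta\tilde u,\eta\tilde u)$ and using the equation $Lu=0$ tested against $\eta^2 u \in W^{1,2}_0(\Omega)$:
\[ \cQ_{\Omega'}(\eta \tilde u, \eta \tilde u) = \int_\Omega u^2 \Vert \nabla \eta \Vert^2 \, d\cL^2 \]
for every compactly supported Lipschitz $\eta$. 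Feeding in the logarithmic cutoffs $\xi_R$ from Lemma \ref{lemm:app.pde.log.cutoff.functions}, together with $u \in L^\infty$, gives $\cQ_{\Omega'}(\xi_R \tilde u, \xi_R \tilde u) \to 0$ as $R \uparrow \infty$.

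The key claim is then that $\tilde u$ is not a weak Jacobi field on $\Omega'$; granting this, one can pick $\phi \in C^\infty_c(\Omega')$ with $c := \cQ_{\Omega'}(\tilde u, \phi) \neq 0$. To rule out $L\tilde u \equiv 0$ weakly on $\Omega'$: if it held, interior elliptic regularity (using $V \in C^\infty_\loc \cap L^\infty$) would upgrade $\tilde u$ to a smooth function on $\Omega'$. But $\Omega'\supsetneq \Omega$ strictly, together with $\Omega$ being Lipschitz, forces $\Omega' \setminus \closure{\Omega}$ to be a nonempty open set (otherwise $\Omega' \subseteq \intr \closure{\Omega} = \Omega$). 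On this set $\tilde u \equiv 0$, so Aronszajn-type unique continuation on the connected domain $\Omega'$ would propagate $\tilde u \equiv 0$ everywhere, hence $u \equiv 0$ on $\Omega$; interior unique continuation applied to the original Jacobi field then gives $u\equiv 0$ globally, contradicting our hypothesis.

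Finally, fix such a $\phi$ and take $R$ so large that $\support \phi \subset B_R$; then $\xi_R \equiv 1$ and $\nabla \xi_R = 0$ on $\support \phi$, which gives $\cQ_{\Omega'}(\xi_R \tilde u, \phi) = \cQ_{\Omega'}(\tilde u, \phi) = c$. Setting $\psi_{R,t} := \xi_R \tilde u - t\,\sign(c)\,\phi \in W^{1,2}_0(\Omega')$, a binomial expansion gives
\[ \cQ(\psi_{R,t},\psi_{R,t}) = \cQ(\xi_R \tilde u,\xi_R \tilde u) - 2t|c| + t^2 \cQ(\phi,\phi) \text{.} \]
Taking $R$ large enough that $\cQ(\xi_R \tilde u, \xi_R \tilde u) < \tfrac14|c|$ and then $t > 0$ small enough makes the right-hand side strictly negative, while $\psi_{R,t} \neq 0$ because $\xi_R \tilde u$ is large in $L^2_\loc$ for $R$ large and $t$ is small. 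The main obstacle is the second step: one must combine interior elliptic regularity for weak $W^{1,2}_\loc$ solutions with bounded potential, Aronszajn-type unique continuation on connected domains, and the Lipschitz structure of $\partial \Omega$, to rule out the pathological possibility that extension by zero of $u|_\Omega$ accidentally produces a weak Jacobi field on the larger domain $\Omega'$.
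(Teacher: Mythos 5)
Your proposal is correct in substance and reaches the conclusion by a genuinely different route from the paper's for the key step. Both arguments share the same skeleton: extend $u|_\Omega$ by zero to get $u' = u\indt{\Omega} \in W^{1,2}_{\loc}\cap L^\infty$, integrate by parts using $u|_{\partial\Omega}\equiv 0$ to obtain the identity $\cQ(\xi_R u', \xi_R u') = \int_\Omega \Vert\nabla\xi_R\Vert^2 u^2 \to 0$ via the logarithmic cutoffs, and use elliptic regularity plus unique continuation on the connected set $\Omega'$ (where $u'$ vanishes on the nonempty open set $\Omega'\setminus\closure{\Omega}$) to rule out $Lu'=0$ weakly on $\Omega'$. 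Where you diverge is in converting this into instability: the paper argues by contradiction, assuming $\ind(L;\Omega')=0$ and invoking the Ekeland variational principle (Theorem \ref{theo:app.pde.ekeland.variational.principle.banach}) to upgrade the almost-minimizing sequence $\xi_R u'$ of the Rayleigh quotient into near-critical points $\psi_R$, whose limit relation forces $\cQ(u',\zeta)=0$ for all $\zeta$; you instead pick $\phi\in C^\infty_c(\Omega')$ with $c=\cQ(u',\phi)\neq 0$ (which exists precisely because $u'$ is not a weak Jacobi field) and perturb, $\psi_{R,t}=\xi_R u' - t\,\sign(c)\phi$, using that $\cQ(\xi_R u',\phi)=c$ exactly once $\xi_R\equiv 1$ on $\support\phi$. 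This first-variation trick is more elementary and avoids the variational principle entirely; it buys a direct construction of a negative direction rather than a proof by contradiction. One small repair is needed in your final optimization: as written ("$R$ large so that $\cQ(\xi_R u',\xi_R u') < \tfrac14|c|$, then $t$ small") the quantifiers are in the wrong order, since for fixed $R$ with $a_R \triangleq \cQ(\xi_R u',\xi_R u')>0$ the quantity $a_R - 2t|c| + t^2\cQ(\phi,\phi)$ tends to $a_R\geq 0$ as $t\downarrow 0$. Fix instead $t>0$ small enough that $2t|c| - t^2\cQ(\phi,\phi) \geq t|c|$, and then choose $R$ large enough that $a_R < t|c|$; this is a one-line change and the rest of your argument (including the observation that $\cQ(\psi_{R,t},\psi_{R,t})<0$ already forces $\psi_{R,t}\neq 0$) goes through.
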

\begin{proof}
	We define the auxiliary function $u' = u \indt{\Omega}$. We will argue by contradiction by means of the following claim:
	
	\begin{clai}
		If $\ind(L; \Omega') = 0$, then $Lu' = 0$ weakly on $\Omega'$.
	\end{clai}
		\begin{proof}[Proof of claim]
		Consider cutoff functions $\xi_R$ as in Lemma \ref{lemm:app.pde.log.cutoff.functions}. Multiplying $Lu = 0$ holds classically in all of $\RR^2$ by $\xi_R u$ and integrating by parts over the nodal domain $\Omega$ (recall $u|_{\partial \Omega} \equiv 0$), we get
		\begin{align*}
			0 & = \int_\Omega (-\Delta u + Vu) \xi_R^2 u \, d\cL^2 \\
				& = \int_\Omega \left[ \xi_R^2 \Vert \nabla u \Vert^2 + V \xi_R^2 u^2 + 2 \xi_R u \langle \nabla \xi_R, \nabla u \rangle \right] \, d\cL^2 \\
				& = \int_\Omega \left[ \Vert \xi_R \nabla u + u \nabla \xi_R \Vert^2 + V \xi_R^2 u^2 - \Vert \nabla \xi_R \Vert^2 u^2 \right] \, d\cL^2 \\
				& = \int_\Omega \left[ \Vert \nabla (\xi_R u) \Vert^2 + V \xi_R^2 u^2 - \Vert \nabla \xi_R \Vert^2 u^2 \right] \, d\cL^2 \text{.}
		\end{align*}
		From the definition of $u' = u \indt{\Omega} \in W^{1,2}_0(\Omega')$, we find---by rearranging---that
		\[ \int_{\Omega'} \left[ \Vert \nabla (\xi_R u') \Vert^2 + V \xi_R^2 (u')^2 \right] \, d\cL^2 = \int_{\Omega'} \Vert \nabla \xi_R \Vert^2 (u')^2 \, d\cL^2 \text{.} \]
		Thus, the Rayleigh quotient $\cR$ \eqref{eq:app.pde.rayleigh.quotient} satisfies
		\begin{equation} \label{eq:app.pde.unstable.past.nodal.domain.i}
			\cR[\xi_R u'] = \Vert \xi_R u' \Vert_{L^2(\Omega')}^{-2} \int_{\Omega'} \Vert \nabla \xi_R \Vert^2 (u')^2 \, d\cL^2 \text{.}
		\end{equation}
		From Lemma \ref{lemm:app.pde.log.cutoff.functions} and $u' \in L^\infty$, we find that
		\begin{equation} \label{eq:app.pde.unstable.past.nodal.domain.ii}
			\lim_{R \uparrow \infty} \int_{\Omega'} \Vert \nabla \xi_R \Vert^2 (u')^2 \, d\cL^2 = 0 \text{,}
		\end{equation}
		and, therefore, since $u' \not \equiv 0$ a.e.,
		\begin{equation} \label{eq:app.pde.unstable.past.nodal.domain.iii}
			\lim_{R \uparrow \infty} \cR[\xi_R u'] = 0 	\text{.}
		\end{equation}
		Since we're assuming $\ind(L; \Omega') = 0$, Proposition \ref{prop:app.pde.negative.eigenfunction.representation} and \eqref{eq:app.pde.unstable.past.nodal.domain.iii} together show that $R \mapsto \xi_R u'$ is a minimizing sequence for the Banach space operator $\cR : W^{1,2}_0(\Omega') \to \RR \cup \{ \infty \}$, which is bounded from below by zero (by assumption), and which is lower semicontinuous everywhere and G\^ateaux-differentiable wherever it is finite.
		
		We apply Theorem  \ref{theo:app.pde.ekeland.variational.principle.banach} at each $\xi_R u'$, with parameters
		\begin{equation} \label{eq:app.pde.unstable.past.nodal.domain.iv}
			\varepsilon_R \triangleq \cR[\xi_R u'], \; \delta_R \triangleq \left( \int_{\Omega'} \Vert \nabla \xi_R \Vert^2 (u')^2 \, d\cL^2 \right)^{1/2} \text{.}
		\end{equation}
		We get $\psi_R \in W^{1,2}_0(\Omega')$ with
		\begin{multline} \label{eq:app.pde.unstable.past.nodal.domain.v}
			\cR[\psi_R] \leq \cR[\xi_R u'] = \varepsilon_R, \; \Vert \xi_R u' - \psi_R \Vert_{W^{1,2}(\Omega')} \leq \delta_R, \\
			\text{and} \; \Vert \delta \cR[\psi_R] \Vert_{W^{1,2}_0(\Omega')^*} \leq \frac{\varepsilon_R}{\delta_R} \text{.}
		\end{multline}
		Let us first make some observations. First, by our choice of parameters and \eqref{eq:app.pde.unstable.past.nodal.domain.i}, we have
		\begin{multline} \label{eq:app.pde.unstable.past.nodal.domain.vi}
			\varepsilon_R \Vert \xi_R u' \Vert_{L^2(\Omega')}^2	= \int_{\Omega'} \Vert \nabla \xi_R \Vert^2 (u')^2 \, d\cL^2 = \delta_R^2 \\
			\implies \Vert \xi_R u' \Vert_{L^2(\Omega')} = \varepsilon_R^{-1/2} \delta_R \text{.}
		\end{multline}
		Second, by the triangle inequality, \eqref{eq:app.pde.unstable.past.nodal.domain.v}, and \eqref{eq:app.pde.unstable.past.nodal.domain.vi}, we have
		\begin{equation} \label{eq:app.pde.unstable.past.nodal.domain.vii}
			\Vert \psi_R \Vert_{L^2(\Omega')} \leq \Vert \xi_R u' \Vert_{L^2(\Omega')} + \delta_R \leq (1+\varepsilon_R^{-1/2}) \delta_R \text{.}
		\end{equation}
		Third, by \eqref{eq:app.pde.unstable.past.nodal.domain.ii} and $u' \not \equiv 0$ a.e., the rightmost term in \eqref{eq:app.pde.unstable.past.nodal.domain.vi} satisfies
		\begin{equation} \label{eq:app.pde.unstable.past.nodal.domain.viii}
			 \lim_R \frac{\varepsilon_R}{\delta_R} = \lim_R \Vert \xi_R u' \Vert_{L^2(\Omega')}^{-2} \left( \int_{\Omega'} \Vert \nabla \xi_R \Vert^2 (u')^2 \, d\cL^2 \right)^{1/2} = 0 \text{.}	
		\end{equation}
		Recall that the G\^ateaux differential satisfies
		\begin{align*} 
			\delta \cR[\psi_R] \{ \zeta \} & = 2 \Big[ \frac{\int_{\Omega'} \left[ \langle \nabla \psi_R, \nabla \zeta \rangle + V \psi_R \zeta \right] \, d\cL^2}{\int_{\Omega'} \psi_R^2 \, d\cL^2} \\
				& \qquad \quad - \frac{\int_{\Omega'} \left[ \Vert \nabla \psi_R \Vert^2 + V \psi_R^2 \right] \, d\cL^2}{\int_{\Omega'} \psi_R^2 \, d\cL^2} \frac{\int_{\Omega'} \psi_R \zeta \, d\cL^2}{\int_{\Omega'} \psi_R^2 \, d\cL^2} \Big] \\
				& = 2 \Vert \psi_R \Vert_{L^2(\Omega')}^{-2} \cQ(\psi_R, \zeta) \\
				& \qquad \qquad - 2 \Vert \psi_R \Vert_{L^2(\Omega')}^{-2} \cR[\psi_R] \int_{\Omega'} \psi_R \zeta \, d\cL^2
		\end{align*}
		and, therefore,
		\begin{equation} \label{eq:app.pde.unstable.past.nodal.domain.ix}
			\cQ(\psi_R, \zeta) = \cR[\psi_R] \int_{\Omega'} \psi_R \zeta \, d\cL^2 + \frac{1}{2} \Vert \psi_R \Vert_{L^2(\Omega')}^2 \delta \cR[\psi_R] \{ \zeta \} \text{.}
		\end{equation}
		We plan to let $R \uparrow \infty$ while holding $\zeta$ fixed. For convenience, assume for now that $\zeta \in C^\infty_c(\Omega')$. We have, from the triangle inequality, \eqref{eq:app.pde.unstable.past.nodal.domain.v}, and \eqref{eq:app.pde.unstable.past.nodal.domain.vii}, that
		\begin{multline*}
			\Vert \psi_R \Vert_{L^2(\Omega)}^2 \Vert \delta \cR[\psi_R] \Vert_{W^{1,2}_0(\Omega')^*} \\
			\leq (1+\varepsilon_R^{-1/2})^2 \delta_R^2 \frac{\varepsilon_R}{\delta_R} = (1+\varepsilon^{1/2})^2 \delta_R \to 0 \text{ as } R \uparrow \infty {.}
		\end{multline*}
		Moreover, Cauchy-Schwarz, \eqref{eq:app.pde.unstable.past.nodal.domain.v}, and \eqref{eq:app.pde.unstable.past.nodal.domain.vii},
		\begin{multline}
			\cR[\psi_R] \int_{\Omega'} \psi_R \zeta \, d\cL^2 \leq \frac{\varepsilon_R}{\delta_R} (1+\varepsilon_R^{-1/2}) \delta_R \Vert \zeta \Vert_{L^2(\Omega')} \\
			= (\varepsilon_R + \varepsilon_R^{1/2}) \Vert \zeta \Vert_{L^2(\Omega')} \to 0 \text{ as } R \uparrow \infty \text{.}
		\end{multline}
		Plugging these last two inequalities into \eqref{eq:app.pde.unstable.past.nodal.domain.ix}, we get
		\begin{equation} \label{eq:app.pde.unstable.past.nodal.domain.x}
			\lim_{R \uparrow \infty} \cQ(\psi_R, \zeta) = 0 \text{.}
		\end{equation}
		On the other hand, since $\support \zeta \subset \subset \RR^2$, from \eqref{eq:app.pde.unstable.past.nodal.domain.v} and the definition of $\xi_R$, we see that
		\[ \lim_{R \uparrow \infty} \cQ(\psi_R, \zeta) = \lim_{R \uparrow \infty} \cQ(\xi_R u', \zeta) = \cQ(u', \zeta) \text{.} \]
		From this, together with \eqref{eq:app.pde.unstable.past.nodal.domain.x}, it follows that $\cQ(u', \zeta) = 0$ for all $\zeta \in C^\infty_c(\Omega')$. It is well-known that $C^\infty_c(\Omega')$ is dense in $W^{1,2}_0(\Omega')$, so $\cQ(u', \zeta) = 0$ for all $\zeta \in W^{1,2}_0(\Omega')$, and the claim follows.
	\end{proof}

	Let's now see how the result follows from this claim. Indeed, if $Lu' = 0$ weakly in $\Omega'$, and $u' = 0$ a.e. in $\Omega' \setminus \Omega$, then by  unique continuation we would have $u' = 0$ in $\Omega'$, so $u = 0$ a.e. in $\Omega$, so by elliptic unique continuation (again) we would have $u = 0$ in $\RR^2$---a contradiction.
\end{proof}

\section{Ekeland variational principle} \label{sec:app.pde.ekeland.variational.principle}

We mention here, without proof, a theorem of Ekeland's we make use of.

\begin{theo}[Ekeland {\cite[Theorem 2.2]{Ekeland74}}] \label{theo:app.pde.ekeland.variational.principle.banach}
	Let $F : X \to \RR \cup \{\infty\}$ be a lower semicontinuous function that is bounded from below on a Banach space $X$, and which is Fr\'echet-differentiable at all finite-valued points. Given $\varepsilon > 0$, $\delta > 0$, and a $u \in X$ such that $F(u) \leq \inf_X F + \varepsilon$, there exists $u' \in X$ such that
	\[ F(u') \leq F(u), \; \Vert u - u' \Vert \leq \delta, \; \text{ and } \Vert \delta F(u') \Vert \leq \varepsilon \delta^{-1}; \]
	here, $\delta F(u') \in X^*$ represents the Fr\'echet derivative at $u'$.
\end{theo}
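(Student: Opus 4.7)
The plan is to follow Ekeland's original inductive argument, which rests on constructing a $\preceq$-minimal element for a tailored partial order. Define on $X$ the relation $v \preceq w$ iff $F(v) + (\varepsilon/\delta)\Vert v - w \Vert \leq F(w)$. Reflexivity is immediate, antisymmetry follows from the non-degeneracy of the norm (applied to two inequalities of $\preceq$), and transitivity from the triangle inequality. For each $w \in X$, the sublevel set $S(w) \triangleq \{v \in X : v \preceq w\}$ is nonempty (contains $w$), and closed in $X$ by lower semicontinuity of $F$ together with continuity of $\Vert \cdot \Vert$.

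Starting from $u_0 \triangleq u$, inductively pick $u_{n+1} \in S(u_n)$ such that $F(u_{n+1}) \leq \frac{1}{2}(F(u_n) + \inf_{S(u_n)} F)$. The resulting sequence $\{u_n\}$ is $\preceq$-monotone, so for $m \geq n$, $u_m \preceq u_n$ gives $(\varepsilon/\delta) \Vert u_m - u_n \Vert \leq F(u_n) - F(u_m)$. Because $\{F(u_n)\}$ is decreasing and bounded below, it is Cauchy, hence so is $\{u_n\}$, and completeness of $X$ yields a limit $u'$. Closedness of each $S(u_n)$ forces $u' \preceq u_n$ for all $n$; taking $n = 0$ gives both $F(u') \leq F(u)$ and $\Vert u - u' \Vert \leq (\delta/\varepsilon)(F(u) - F(u')) \leq \delta$, using the near-infimum hypothesis on $u$.

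Next I would verify $\preceq$-minimality of $u'$: if $v \preceq u'$, then $v \preceq u_n$ for all $n$, so $v \in S(u_n)$ and the selection rule yields $F(v) \geq 2F(u_{n+1}) - F(u_n) \to F(u')$; combined with $F(v) \leq F(u')$ (from $v \preceq u'$) and $(\varepsilon/\delta)\Vert v - u' \Vert \leq F(u') - F(v) = 0$, this forces $v = u'$. Rephrased: for every $v \neq u'$, $F(u') - F(v) < (\varepsilon/\delta)\Vert v - u' \Vert$.

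To convert minimality into the derivative bound, I would set $v = u' + th$ for fixed nonzero $h \in X$ and $t > 0$, Fr\'echet-expand $F(u' + th) = F(u') + t \, \delta F(u')\{h\} + o(t)$, divide the minimality inequality by $t$, and send $t \downarrow 0$ to obtain $-\delta F(u')\{h\} \leq (\varepsilon/\delta) \Vert h \Vert$; replacing $h$ by $-h$ and taking the supremum over unit vectors gives $\Vert \delta F(u') \Vert_{X^*} \leq \varepsilon/\delta$. The only subtle point in this plan is the inductive construction: one must pick $u_{n+1}$ aggressively enough to make $\{u_n\}$ Cauchy, and the halfway-to-infimum choice is exactly what forces $F(u_{n+1}) - \inf_{S(u_{n+1})} F \leq \tfrac12(F(u_n) - \inf_{S(u_n)} F)$, propagating the minimality down to the limit.
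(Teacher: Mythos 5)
The paper states this result without proof---it is quoted verbatim from Ekeland's article \cite{Ekeland74}---so there is no internal argument to compare against. Your proposal is the standard (indeed, Ekeland's own) proof, and it is correct: the partial order, the halving selection, the Cauchy/closedness argument yielding a $\preceq$-minimal point $u'$, and the conversion of minimality into the bound $\Vert \delta F(u')\Vert \leq \varepsilon/\delta$ via the first-order expansion along $u'+th$ all go through. One cosmetic slip: $2F(u_{n+1})-F(u_n)$ converges to $\lim_n F(u_n)$, which lower semicontinuity only guarantees is $\geq F(u')$ rather than equal to it; the inequality $F(v)\geq F(u')$ you need still follows, so nothing breaks.
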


\end{document}